\newcommand{\bvec}[1]{\boldsymbol{#1}}
\newcommand{\dif}{{\rm d}}
\newlength{\overwdth}
\def\abs#1{\ensuremath{\left \lvert #1 \right \rvert}}
\newcommand{\bigabs}[1]{\ensuremath{\bigl \lvert #1 \bigr \rvert}}
\newcommand{\biggabs}[1]{\ensuremath{\biggl \lvert #1 \biggr \rvert}}
\newcommand{\norm}[2][{}]{\ensuremath{\left \lVert #2 \right \rVert}_{#1}}
\newcommand{\bignorm}[2][{}]{\ensuremath{\bigl \lVert #2 \bigr \rVert}_{#1}}
\DeclareMathOperator{\sgn}{sgn}
\DeclareMathOperator{\Prob}{Prob}
\DeclareMathOperator{\cost}{cost}
\DeclareMathOperator{\comp}{comp}
\DeclareMathOperator{\std}{std}
\DeclareMathOperator{\sign}{sign}
\DeclareMathOperator{\Order}{{\mathcal O}}
\newcommand{\vzero}{\bvec{0}}
\newcommand{\vL}{\bvec{L}}
\newcommand{\vx}{\bvec{x}}
\newcommand{\vy}{\bvec{y}}
\newcommand{\tfc}{\tilde{\fc}}
\newcommand{\tF}{\widetilde{F}}
\newcommand{\tcf}{\widetilde{\cf}}
\newcommand{\tn}{\tilde{n}}
\newcommand{\bbone}{\mathbbm{1}}
\newcommand{\naturals}{\mathbb{N}}
\newcommand{\reals}{\mathbb{R}}
\newcommand{\natzero}{\mathbb{N}_{0}}
\newcommand{\ca}{\mathcal{A}}
\newcommand{\cb}{\mathcal{B}}
\providecommand{\cc}{\mathcal{C}}
\newcommand{\cf}{\mathcal{F}}
\newcommand{\cg}{\mathcal{G}}
\newcommand{\ci}{\mathcal{I}}
\newcommand{\cj}{\mathcal{J}}
\newcommand{\cl}{\mathcal{L}}
\newcommand{\cn}{\mathcal{N}}
\newcommand{\cu}{\mathcal{U}}
\newcommand{\cv}{\mathcal{V}}
\newcommand{\cw}{\mathcal{W}}
\newcommand{\cx}{\mathcal{X}}
\newcommand{\fc}{\mathfrak{c}}
\newcommand{\fC}{\mathfrak{C}}
\DeclareMathOperator{\Var}{Var}
\DeclareMathOperator{\INT}{INT}
\DeclareMathOperator{\APP}{APP}
\DeclareMathOperator{\lin}{lin}
\DeclareMathOperator{\fix}{fix}
\DeclareMathOperator{\err}{err}
\DeclareMathOperator{\maxcost}{maxcost}
\DeclareMathOperator{\mincost}{mincost}
\newcommand{\herr}{\widehat{\err}}
\newtheorem{theorem}{Theorem}
\newtheorem{lem}{Lemma}
\newtheorem{cor}{Corollary}
\theoremstyle{definition}
\newtheorem{algo}{Algorithm}
\theoremstyle{remark}
\newtheorem{rem}{Remark}
\newcommand{\Fnorm}[1]{\abs{#1}_{\cf}}
\newcommand{\Ftnorm}[1]{\abs{#1}_{\tcf}}
\newcommand{\Gnorm}[1]{\norm[\cg]{#1}}
\journal{Journal of Complexity}
\begin{document}

\begin{frontmatter}

\title{The Cost of Deterministic, Adaptive, Automatic Algorithms:  Cones, Not Balls}
\author{Nicholas Clancy}
\author{Yuhan Ding} \ead{ding2@hawk.iit.edu}
\author{Caleb Hamilton}
\author{Fred J. Hickernell} \ead{hickernell@iit.edu}
\author{Yizhi Zhang} \ead{yzhang97@hawk.iit.edu}
\address{Room E1-208, Department of Applied Mathematics, Illinois Institute of Technology,\\ 10 W.\ 32$^{\text{nd}}$ St., Chicago, IL 60616}

\begin{abstract} 
Automatic numerical algorithms attempt to provide approximate solutions that differ from exact solutions by no more than a user-specified error tolerance. The computational cost is often determined \emph{adaptively} by the algorithm based on the function values sampled. While adaptive, automatic algorithms are widely used in practice, most lack \emph{guarantees}, i.e., conditions on input functions that ensure that the error tolerance is met. 

This article establishes a framework for guaranteed, adaptive, automatic algorithms. Sufficient conditions for success and two-sided bounds on the computational cost are provided in Theorems \ref{TwoStageDetermThm} and \ref{MultiStageThm}.  Lower bounds on the complexity of the problem are given in Theorem \ref{complowbd}, and conditions under which the proposed algorithms have optimal order are given in Corollary \ref{optimcor}. These general theorems are illustrated for univariate numerical integration and function recovery via adaptive algorithms based on linear splines.  

The key to these adaptive algorithms is performing the analysis for \emph{cones} of input functions rather than balls.  Cones provide a setting where adaption may be beneficial.
\end{abstract}

\begin{keyword}
adaptive \sep automatic \sep cones \sep function recovery \sep guarantee \sep integration \sep quadrature

\MSC[2010] 65D05 \sep 65D30 \sep 65G20

\end{keyword}
\end{frontmatter}

\section{Introduction}

Automatic algorithms conveniently determine the computational effort required to obtain an approximate answer that differs from the true answer by no more than an error tolerance, $\varepsilon$.  The required inputs are both $\varepsilon$ and a black-box routine that provides function values.  Unfortunately, most commonly used adaptive, automatic algorithms are not guaranteed to provide answers satisfying the error tolerance. On the other hand, most existing guaranteed automatic algorithms are not adaptive, i.e., they are do not adjust their effort based on information about the function obtained through sampling.  The goal here is to construct adaptive, automatic algorithms that are guaranteed to satisfy the error tolerance.

\subsection{Non-Adaptive, Automatic Algorithms for Balls of Input Functions} \label{nonadaptintrosubsec}
Let $\cf$ be a \emph{linear space} of input functions defined on $\cx$ with \emph{semi-norm} $\Fnorm{\cdot}$, let $\cg$ be a linear space of outputs with \emph{norm} $\norm[\cg]{\cdot}$, and let $S:\cf \to \cg$ be a \emph{solution operator}.  Suppose that one has a sequence of fixed-cost algorithms, $\{A_n\}_{n \in \ci}$, indexed by their computational cost, $n$, with $\ci \subseteq \natzero$.  Furthermore, suppose that there is some known error bound of the form
\begin{subequations} \label{algoerr}
\begin{equation} \label{traditionerra}
\norm[\cg]{S(f)-A_n(f)} \le h(n) \Fnorm{f},
\end{equation}
where $h:\ci \to [0,\infty)$ is non-negative valued and non-increasing. Note that $A_n$ must be exact for input functions with vanishing semi-norms, i.e., $S(f)=A_n(f)$ if $\Fnorm{f}=0$.   Furthermore, $h$ is assumed to have zero infimum, which makes it possible to define $h^{-1}$ for all positive numbers:
\begin{equation} \label{hinvdef}
\inf_{n \in \ci} h(n) = 0, \qquad h^{-1}(\varepsilon) = \min \{n \in \ci : h(n) \le \varepsilon\}, \qquad \varepsilon > 0.
\end{equation}
\end{subequations}
Error bound \eqref{algoerr} allows one to construct an automatic, yet non-adaptive, algorithm that is guaranteed for input functions in a prescribed $\cf$-ball.

\begin{algo}[Non-Adaptive, Automatic] \label{nonadaptalgo} Let $\{A_n\}_{n \in \ci}$ be defined as above, and let $\sigma$ be a fixed positive number.  For any input function $f\in \cb_{\sigma}:=\{ f \in \cf : \Fnorm{f} \le \sigma\}$ and any positive error tolerance $\varepsilon$, find the computational cost needed to satisfy the error tolerance, $n=h^{-1}(\varepsilon/\sigma)$.  Return $A_n(f)$ as the answer.
\end{algo}

\begin{theorem}  \label{NonAdaptDetermThm}  For $\cf$, $\Fnorm{\cdot}$, $\cg$, $\Gnorm{\cdot}$, $S$ as described above, and under the assumptions of Algorithm \ref{nonadaptalgo}, if $f$ lies in the ball $\cb_\sigma$, then the answer provided by Algorithm \ref{nonadaptalgo} must satisfy the error tolerance, i.e., $\norm[\cg]{S(f)-A_n(f)} \le \varepsilon$.
\end{theorem}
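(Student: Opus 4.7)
The plan is to chain together three facts: the choice of $n$ dictated by Algorithm \ref{nonadaptalgo}, the definition of $h^{-1}$ in \eqref{hinvdef}, and the a priori error bound \eqref{traditionerra}. All three pieces are already in place in the excerpt, so the argument amounts to substituting them in the right order.

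First I would record that, since $f \in \cb_\sigma$, we have $\Fnorm{f} \le \sigma$. Next, the algorithm sets $n = h^{-1}(\varepsilon/\sigma)$, which by the definition in \eqref{hinvdef} is the smallest element of $\ci$ satisfying $h(n) \le \varepsilon/\sigma$; in particular $h(n) \le \varepsilon/\sigma$ holds. Finally, plugging into the error bound \eqref{traditionerra} gives
\[
\norm[\cg]{S(f) - A_n(f)} \;\le\; h(n)\,\Fnorm{f} \;\le\; \frac{\varepsilon}{\sigma}\cdot \sigma \;=\; \varepsilon,
\]
which is the desired conclusion.

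The only mildly delicate point is that $h^{-1}(\varepsilon/\sigma)$ is well-defined in the first place: this needs the infimum condition $\inf_{n\in\ci} h(n) = 0$ from \eqref{hinvdef}, which guarantees that the set $\{n \in \ci : h(n) \le \varepsilon/\sigma\}$ is nonempty for every positive $\varepsilon/\sigma$, so the minimum exists. One should also briefly note the edge case $\Fnorm{f}=0$: then $A_n(f)=S(f)$ exactly by the assumption stated after \eqref{traditionerra}, so the inequality holds trivially regardless of the chosen $n$. Beyond these bookkeeping remarks, there is no genuine obstacle, and the proof is essentially a one-line substitution.
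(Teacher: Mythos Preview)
Your argument is correct and is exactly the natural one-line substitution the statement calls for. The paper itself does not supply a proof of this theorem; it simply states the result and remarks that it is ``essentially known,'' so there is nothing further to compare against.
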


Algorithm \ref{nonadaptalgo}, Theorem \ref{NonAdaptDetermThm}, and the other theoretical results in this article related to Algorithm \ref{nonadaptalgo} are essentially known.  They serve as a benchmark to which we may compare our new adaptive algorithms.  

Algorithm \ref{nonadaptalgo} has drawbacks.  If it works for $f \in \cf$, it may not work for $cf \in \cf$, where $c>1$, because $cf$ may fall outside the ball $\cb_\sigma$.  Moreover, although error bound \eqref{traditionerra} depends on $\Fnorm{f}$, the computational cost of Algorithm \ref{nonadaptalgo} does not depend on $\Fnorm{f}$.  The cost is the same whether $\Fnorm{f}=\sigma$ or $\Fnorm{f}$ is much smaller than $\sigma$.  This is because Algorithm \ref{nonadaptalgo} is not adaptive.

\subsection{Adaptive, Automatic Algorithms for Cones of Input Functions} \label{adapintrosec}

Adaptive, automatic algorithms are common in numerical software packages.  Examples include  MATLAB's {\tt quad} and {\tt integral} \cite{MAT8.1}, the quadrature algorithms in the NAG Library \cite{NAG23}, and the MATLAB Chebfun toolbox \cite{TrefEtal12}.  While these adaptive algorithms work well for many cases, they have no rigorous justification. The methods used to determine the computational cost are either heuristics or asymptotic error estimates that do not hold for finite sample sizes.

In this article we derive guaranteed adaptive, automatic algorithms.  These adaptive algorithms use $\{A_n\}_{n \in \ci}$ with known $h$ as described in \eqref{algoerr} and satisfying some additional technical conditions in \eqref{algseqerrcond}.  Rather than assuming an upper bound on $\Fnorm{f}$, our adaptive algorithms use function data to construct \emph{rigorous} upper bounds on $\Fnorm{f}$.  We highlight the requirements here.

The key idea is to identify a suitable semi-norm on $\cf$, $\Ftnorm{\cdot}$, that is weaker than $\Fnorm{\cdot}$, i.e., there exists a positive constant $\tau_{\min}$ for which 
\begin{equation} \label{Fspacecondstrong}
\tau_{\min} \Ftnorm{f} \le \Fnorm{f} \qquad \forall f \in \cf.
\end{equation}
Moreover, there must exist a sequence of algorithms, $\{\tF_n\}_{n\in \ci}$, which approximates $\Ftnorm{\cdot}$ and has a two-sided error bound:
\begin{equation} \label{Gerrbds}
-h_{-}(n)\Fnorm{f} \le \Ftnorm{f}- \tF_n(f) \le h_{+}(n) \Fnorm{f}, \qquad \forall f \in \cf,
\end{equation}
for known non-negative valued, non-increasing $h_{\pm}$ satisfying $\inf_{n \in \ci} h_{\pm}(n) = 0$.  The adaptive algorithms to approximate $S$ are defined for a cone of input functions:
\begin{equation} \label{conedef}
\cc_{\tau}=\{f \in \cf : \Fnorm{f} \le \tau \Ftnorm{f} \}.
\end{equation}
(An arbitrary cone is a subset of a vector space that is closed under scalar multiplication.) Although the functions in this cone may have arbitrarily large $\tcf$- and $\cf$-semi-norms, the assumptions above make it
possible to construct reliable, data-driven upper bounds on $\Ftnorm{f}$ and $\Fnorm{f}$. 

The above assumptions are all that is required for our two-stage adaptive Algorithm \ref{twostagedetalgo}.  For our multi-stage adaptive Algorithm \ref{multistagealgo}, we further assume that the algorithms $\tF_n$ and $A_n$ use the same function data for all $n \in \ci$.  We also assume that there exists some $r>1$ such that for every $n \in \ci$ there exists an $\tn \in \ci$ satisfying $n < \tn \le rn$ and for which the data for $A_n$ are embedded in the data for $A_{\tn}$. One may think of $r$ as the cost multiple that one might need to incur when moving to the next more costly nested algorithm.

Section \ref{integsec} applies these ideas to the problem of evaluating $\int_0^1 f(x) \, \dif x$.  Here $\cf$ is the set of all continuous functions whose first derivatives have finite (total) variation, $\Fnorm{f}=\Var(f')$, and $\Ftnorm{f}=\norm[1]{f'-f(1)+f(0)}$.  The adaptive algorithm is a composite, equal-width, trapezoidal rule, where the number of trapezoids depends on the data-driven upper bound on $\Var(f')$.  The computational cost is no greater than $4+ \tau + \sqrt{\tau \Var(f')/(4\varepsilon)}$ (Theorem \ref{multistageintegthm}), where $\Var(f')$ is unknown. Here the cone constant $\tau$ is related to the minimum sample size, and $1/\tau$ represents a length scale for possible spikes that one wishes to integrate accurately.

\subsection{Scope and Outline of this Article} 
There are theoretical results providing conditions under which adaption is useful and when it is not useful. See for example, the comprehensive survey by Novak \cite{Nov96a} and more recent articles by Plaskota and Wasilkowski \cite{PlaWas05a,PlaEtal08a}. Here we consider a somewhat different situation.  Our focus is on cones of input functions because they provide a setting where adaptive stopping rules can be effective. Since adaptive stopping rules are often used in practice, even without theoretical guarantees, we want to justify their use.  However, the stopping rules that we adopt differ from those widely used (see Section \ref{Lynesssubsec}).

This article starts with the general setting and then moves to two concrete cases.  Section \ref{probdefsec} defines the problems to be solved and introduces our notation.  Sections \ref{genthmsec} and \ref{LowBoundSec} describe the adaptive algorithms in detail and provide proofs of their success for cones of input functions.  Our ultimate goal is to construct good locally adaptive algorithms, where the sampling density varies according to the function data.  However, here we present only globally adaptive algorithms, where the sampling density is constant, but the number of samples is determined adaptively.  Section \ref{integsec} illustrates the general results in Sections \ref{genthmsec} and \ref{LowBoundSec} for the univariate integration problem.  Section \ref{approxsec}  presents analogous results for function approximation.  Common concerns about adaptive algorithms are answered in Section \ref{overcomesec}. The article ends with several suggestions for future work.

\section{General Problem Definition} \label{probdefsec}

\subsection{Problems and Algorithms} The function approximation, integration, or other problem to be solved is defined by a solution operator $S:\cf \to \cg$ as described in Section \ref{nonadaptintrosubsec}. The solution operator is assumed to be positively homogeneous, i.e., 
\[
S(cf) = cS(f) \qquad \forall c\ge 0.
\]
Examples include the following:
\begin{align*}
\text{Integration:} \quad & S(f) = \int_{\cx} f(\vx) \, w(\vx) \, \dif \vx, \quad w \text{ is fixed,}\\
\text{Function Recovery:} \quad & S(f) = f, \\
\text{Poisson's Equation:} \quad & S(f) = u, \quad \text{where } \begin{array}{c} -\Delta u(\vx) = f(\vx), \ \vx \in \cx, \\ u(\vx)=0 \ \forall \vx \in \partial \cx, \text{ and}\end{array} \\
\text{Optimization:} \quad & S(f) = \min_{\vx \in \cx} f(\vx).
\end{align*}
The first three examples above are linear problems, but the last example is a nonlinear problem, which nevertheless is positively homogeneous.

Given a ``nice'' subset of input functions, $\cn \subseteq \cf$, an automatic algorithm $A:\cn\times(0,\infty) \to \cg$ takes as inputs a function, $f$, and an error tolerance, $\varepsilon$.  Our goal is to find an $A$ for which $\norm[\cg]{S(f)-A(f,\varepsilon)}\le \varepsilon$.  Algorithm \ref{nonadaptalgo} is one non-adaptive example that is successful for functions in balls, i.e., $\cn=\cb_{\sigma}$.

Following \cite[Section 3.2]{TraWasWoz88}, the algorithm takes the form of some function of data derived from the input function:
\begin{equation*}
\label{algoform}
A(f,\varepsilon) =  \phi(\vL(f)), \quad \vL(f) = \left(L_1(f), \ldots, L_m(f)\right) \qquad \forall f \in \cf.
\end{equation*}
Here the $L_i \in \Lambda$ are real-valued homogeneous functions defined on $\cf$:
\begin{equation*}
\label{dataassump}
L(cf) = cL(f) \qquad \forall f \in \cf, \ c \in \reals, \ L \in \Lambda.
\end{equation*}
One popular choice for $\Lambda$ is the set of all function values, $\Lambda^{\std}$, i.e., $L_i(f) = f(\vx_i)$ for some $\vx_i \in \cx$.  Another common choice is the set of all bounded linear functionals, $\Lambda^{\lin}$.  In general, $m$ may depend on  $\varepsilon$ and the $L_i(f)$, and each $L_i$ may depend on $L_1(f), \ldots, L_{i-1}(f)$.  The set of all such algorithms is denoted by $\ca(\cn,\cg,S,\Lambda)$. For example, Algorithm \ref{nonadaptalgo} lies in $\ca(\cb_{\sigma},\cg,S,\Lambda)$.  In this article, all algorithms are assumed to be deterministic.  There is no randomness.

\subsection{Costs of Algorithms} \label{AlgoCostsec}

The cost of a possibly adaptive algorithm, $A$, depends on the function and the error tolerance: 
\[
\cost(A,f,\varepsilon) = \$(\vL) = \$(L_1) + \cdots +\$(L_m) \in \natzero,
\]
where $\$:\Lambda \to \naturals$, and $\$(L)$ is the cost of acquiring the datum $L(f)$. The cost of $L$ may be the same for all $L \in \Lambda$, e.g, $\$(L)=1$.  Alternatively, the cost might vary with the choice of $L$.  For example, if $f$ is a function of the infinite sequence of real numbers, $(x_1, x_2, \ldots)$, the cost of evaluating the function with arbitrary values of the first $d$ coordinates, $L(f)=f(x_1, \ldots, x_d, 0, 0, \ldots)$, might be $d$.  This cost model has been used by for integration problems \cite{HicMGRitNiu09a,KuoEtal10a,NiuHic09a,NiuHic09b,PlaWas11a} and function approximation problems \cite{Was13a,WasWoz11a,WasWoz11b}.  If an algorithm does not require any function data, then its cost is zero.

Although the cost of an adaptive algorithm varies with $f$, we hope that it does not vary wildly for different input functions with the same $\cf$-semi-norm. We define the \emph{maximum} and \emph{minimum} costs of the algorithm $A\in \ca(\cn,\cg,S,\Lambda)$ relative to $\cb_{s}$, the $\cf$-semi-norm ball, as follows:
\begin{gather*}
\maxcost(A,\cn,\varepsilon,\cb_s)
= \sup \{ \cost(A,f,\varepsilon) : f \in \cn \cap \cb_{s} \}, \\ \mincost(A,\cn,\varepsilon,\cb_s)
= \inf \biggl \{ \cost(A,f,\varepsilon) : f \in \cn\setminus \bigcup_{0 \le s'<s}\cb_{s'} \biggr \} .
\end{gather*}
Note that $A$ knows that $f \in \cn$, but $A$ does not know $\Fnorm{f}$ (unless $\inf_{f \in \cn} \Fnorm{f}=\sup_{f \in \cn} \Fnorm{f}$).  An algorithm is said to have \emph{$\cb_{s}$-stable computational cost} if 
\begin{equation*}\label{coststabledef}
\sup_{\varepsilon, s > 0} \frac{\maxcost(A,\cn,\varepsilon,\cb_s)}{\max(1,\mincost(A,\cn,\varepsilon, \cb_s))} < \infty.
\end{equation*} 
An analogous definition of the stability of computational cost can be made in terms of $\tcf$-semi-norm balls.

The complexity of a problem is defined as the maximum cost of the cheapest algorithm that always satisfies the error tolerance:
\begin{multline*}
\comp(\varepsilon,\ca(\cn,\cg,S,\Lambda),\cb_s) \\
 = \inf\left\{\maxcost(A,\cn,\varepsilon,\cb_s) : A \in \ca(\cn,\cg,S,\Lambda), \right. \\
 \left . \norm[\cg]{S(f)-A(f,\varepsilon)} \le \varepsilon \ \ \forall f \in \cn, \ \varepsilon \ge 0 \right \} \in \natzero.
\end{multline*}
Here the infimum of an empty set is defined to be $\infty$.  

Algorithm \ref{nonadaptalgo} is defined for input functions lying in the ball $\cb_{\sigma}$.  It is not adaptive, and its cost depends only on $\varepsilon/\sigma$, but not on the particulars of $f$:
\begin{multline} \label{nonadaptalgocost}
\maxcost(A,\cb_{\sigma},\varepsilon,\cb_{s}) = \mincost(A,\cb_{\sigma},\varepsilon,\cb_{s}) \\
= \cost(A,f,\varepsilon) = h^{-1}(\varepsilon/\sigma) \qquad 0 < s \le \sigma.
\end{multline}

\subsection{Fixed-Cost Algorithms}

Automatic Algorithm \ref{nonadaptalgo} is built from a sequence of fixed-cost algorithms, $\{A_n\}_{n \in \ci}$.  The set of all fixed-cost algorithms is denoted by $\ca_{\fix}(\cf,\cg,S,\Lambda)$.  Any such algorithm is defined for all $f \in \cf$ and indexed by its cost.  Neither the number of function data nor the choice of the $L_i$ depend on the input function or $\varepsilon$, so we write $A_n(f)$ rather than $A_n(f,\varepsilon)$. Any fixed-cost algorithm is assumed be positively homogeneous:
\begin{equation*}
\label{algoscale}
\vL(cf) = c \vL(f), \ \
\phi(c\vy) = c\phi(\vy), \ \ A_n(cf) = cA_n(f) \quad \forall c \ge 0, \ f \in \cf,\ \vy \in \reals^m,
\end{equation*}
so its error, $\Gnorm{S(f)-A_n(f)}$, is positively homogeneous.

The adaptive algorithms in the next section use sequences of fixed-cost algorithms, $\{A_n\}_{n \in \ci}$ with $A_n  \in \ca_{\fix}(\cf,\cg,S,\Lambda)$ and indexed by their cost, $n=\cost(A_n)$.  The sequence $\{A_n(f)\}_{n \in \ci}$ converges to the true answer for all $f\in \cf$, as guaranteed by the conditions in \eqref{algoerr}.  Furthermore the index set, $\ci=\{N_1, N_2, \ldots\} \subseteq \natzero$, satisfies $N_i < N_{i+1}$ and
\begin{subequations} \label{algseqerrcond}
\begin{equation} \label{Inobiggap}
\sup_{i\ge 2} \frac{N_{i+1}}{N_i} \le \rho <\infty.
\end{equation}
Finally, in this article we assume that $h$ satisfies
\begin{equation} \label{hinvonetwocond}
\sup_{\epsilon > 0} \frac{h^{-1}(\varepsilon)}{\max(1,h^{-1}(2\varepsilon))} < \infty.
\end{equation}
\end{subequations}
This means that $h(n) = \Order(n^{-\alpha})$ as $n\to \infty$ for some $\alpha>0$.

\section{General Algorithms and Upper Bounds on the Complexity} \label{genthmsec}

This section provides general theorems about the cost of automatic algorithms.  The hypotheses of these theorems are non-trivial to verify for specific problems of interest.  However, the assumptions are reasonable as demonstrated by the examples in Sections \ref{integsec} and \ref{approxsec}.  

\subsection{Bounding the $\tcf$-Semi-Norm} \label{Galgosec}

As mentioned in Section \ref{adapintrosec}, adaptive, automatic algorithms require reliable upper bounds on $\Ftnorm{f}$ for all $f$ in the cone $\cc_{\tau}$. These can be obtained using any sequence of fixed-cost algorithms $\{\tF_n\}_{n \in \ci}$ with $\tF_n \in \ca_{\fix}(\cf,\reals_+,\Ftnorm{\cdot},\Lambda)$ satisfying the two-sided error bound in \eqref{Gerrbds}.  This implies that $\tF_n(f)=\Ftnorm{f}=0$ for all $f \in \cf$ with vanishing $\cf$-semi-norm.  Rearranging \eqref{Gerrbds} and applying the two bounds for the $\tcf$- and $\cf$-semi-norms in \eqref{Fspacecondstrong} and \eqref{conedef} implies that if $h_{ +}(n) < 1/\tau$, then for all $f \in \cc_{\tau}$,
\begin{gather*} 
\tF_n(f) \le \Ftnorm{f} + h_{-}(n)\Fnorm{f} \le \begin{cases} [1+ \tau h_{-}(n)]\Ftnorm{f} \\
\displaystyle \left [\frac{1}{\tau_{\min}}+h_{-}(n) \right]\Fnorm{f} 
\end{cases},\\
\tF_n(f) \ge \Ftnorm{f} - h_{+}(n)\Fnorm{f} \ge [1 - \tau h_{+}(n)]\Ftnorm{f}
\ge \left [\frac{1}{\tau}-h_{+}(n) \right]\Fnorm{f}.
\end{gather*}

\begin{lem} \label{Gnormlem} Any sequence of fixed-cost algorithms  $\{\tF_n\}_{n \in \ci}$ as described above with two sided error bound \eqref{Gerrbds} yields an approximation to the $\tcf$-semi-norm of functions in the cone $\cc_{\tau}$ with the following upper and lower bounds:
\begin{equation} \label{twosidedGineq}
 \frac{\Fnorm{f}}{\tau \fC_n}  \le \frac{\Ftnorm{f}}{\fC_n}  \le \tF_n(f) \le  \begin{cases} \tfc_n \Ftnorm{f} \\[1ex]
\displaystyle \frac{\fc_n \Fnorm{f} }{\tau_{\min}}
\end{cases} \qquad \forall f \in \cc_{\tau},
\end{equation}
where the $\fc_n$, $\tfc_n$, and $\fC_n$ are non-increasing in $n$ and defined as follows:
\begin{gather} \label{normdeflate}
\tfc_n :=1 + \tau h_{ -}(n)  \ge \fc_n :=1 + \tau_{\min} h_{ -}(n)  \ge 1, \\
\label{norminflate}
\fC_n :=\frac{1}{1 - \tau h_{ +}(n)}, \qquad  \fC_n \ge 1 \text{ for } h_{ +}(n) < 1/\tau.
\end{gather}
\end{lem}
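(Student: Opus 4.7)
The statement is essentially algebraic bookkeeping on top of the two-sided error bound \eqref{Gerrbds} and the cone/semi-norm relations \eqref{Fspacecondstrong}, \eqref{conedef}, so my plan is mainly to organize the four desired inequalities and read off the constants.

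First I would rearrange \eqref{Gerrbds} into the sandwich
\[
\Ftnorm{f} - h_{+}(n)\Fnorm{f} \;\le\; \tF_n(f) \;\le\; \Ftnorm{f} + h_{-}(n)\Fnorm{f}, \qquad f \in \cf,
\]
which is the only analytic input; everything else follows from substituting the two cone inequalities $\Fnorm{f} \le \tau\Ftnorm{f}$ and $\Ftnorm{f} \le \Fnorm{f}/\tau_{\min}$. For the lower bound, applying $\Fnorm{f} \le \tau\Ftnorm{f}$ to the $h_{+}$ term yields $\tF_n(f) \ge (1-\tau h_{+}(n))\Ftnorm{f} = \Ftnorm{f}/\fC_n$, which is the middle inequality of \eqref{twosidedGineq}; multiplying by $1/\tau$ and using $\Ftnorm{f} \ge \Fnorm{f}/\tau$ then gives the leftmost inequality $\Fnorm{f}/(\tau\fC_n) \le \Ftnorm{f}/\fC_n$. (Here the hypothesis $h_{+}(n) < 1/\tau$ is precisely what keeps $\fC_n$ positive and $\ge 1$, as required by \eqref{norminflate}.)

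For the two upper bounds on $\tF_n(f)$, I would branch on which of the cone relations to apply to the $h_{-}$ term. Using $\Fnorm{f} \le \tau\Ftnorm{f}$ immediately gives
\[
\tF_n(f) \le (1+\tau h_{-}(n))\Ftnorm{f} = \tfc_n \Ftnorm{f},
\]
the first branch. Instead using $\Ftnorm{f} \le \Fnorm{f}/\tau_{\min}$ on the $\Ftnorm{f}$ term and then factoring out $1/\tau_{\min}$ yields
\[
\tF_n(f) \le \left(\tfrac{1}{\tau_{\min}} + h_{-}(n)\right)\Fnorm{f} = \frac{(1+\tau_{\min} h_{-}(n))\Fnorm{f}}{\tau_{\min}} = \frac{\fc_n \Fnorm{f}}{\tau_{\min}},
\]
the second branch.

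Finally I would verify the ancillary claims about the constants: $\tau_{\min} \le \tau$ (forced by combining \eqref{Fspacecondstrong} with any nonzero $f \in \cc_{\tau}$), so $\tfc_n \ge \fc_n$ from \eqref{normdeflate}; and $\fc_n \ge 1$ is immediate since $h_{-}(n) \ge 0$. Monotonicity of $\fc_n$, $\tfc_n$, and $\fC_n$ in $n$ reduces to monotonicity of $h_{\pm}$, and the lower bound $\fC_n \ge 1$ when $h_{+}(n) < 1/\tau$ is direct from \eqref{norminflate}. There is no real obstacle here; the only thing to keep straight is which cone inequality to plug into which side, and I would be careful to note that the hypothesis $h_{+}(n) < 1/\tau$ is used exactly once, to guarantee $\fC_n$ is well-defined and positive.
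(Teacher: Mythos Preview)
Your proposal is correct and follows essentially the same route as the paper: the paper's argument (given in the paragraph immediately preceding the lemma) is precisely to rearrange \eqref{Gerrbds} into the sandwich $\Ftnorm{f} - h_{+}(n)\Fnorm{f} \le \tF_n(f) \le \Ftnorm{f} + h_{-}(n)\Fnorm{f}$ and then apply the cone inequalities $\Fnorm{f} \le \tau\Ftnorm{f}$ and $\tau_{\min}\Ftnorm{f} \le \Fnorm{f}$ to each side, exactly as you do. Your treatment is in fact slightly more complete, since you also spell out why $\tfc_n \ge \fc_n \ge 1$, why $\fC_n \ge 1$, and why the constants are non-increasing in $n$, points the paper states but does not argue.
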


\subsection{Two-Stage Adaptive Algorithms} \label{twostagesec}

Computing an approximate solution to the problem $S: \cc_{\tau} \to \cg$ also depends on a sequence of fixed-cost algorithms, $\{A_n\}_{n \in \ci}$, satisfying \eqref{algoerr} and \eqref{algseqerrcond}.  One may then use the upper bound in Lemma \ref{Gnormlem} to construct a data-driven upper bound on the error provided that $\fC_n>0$, i.e.,  $h_{ +}(n) < 1/\tau$:
\begin{equation} \label{Anerrbound}
\norm[\cg]{S(f) -  A_n(f)} \le h(n)\Fnorm{f} \le \tau  \fC_n h(n)\tF_n(f) \qquad \forall f \in \cc_{\tau}.
\end{equation}

\begin{algo}[Adaptive, Automatic, Two-Stage] \label{twostagedetalgo} Let $\tau$ be a fixed positive number, and let $\cc_\tau$ be the cone of functions defined in \eqref{conedef} whose $\cf$-semi-norms are no larger than $\tau$ times their $\tcf$-semi-norms.  Let $n_{\tF}$ satisfy $h_{+}(n_{\tF}) < 1/\tau$, and let $\tF_{n_{\tF}}$ be an algorithm as described in Lemma \ref{Gnormlem} with cost $n_{\tF}$.
Moreover, let  $\{A_n\}_{n \in \ci}$ be a sequence of algorithms as described in \eqref{algoerr} and \eqref{algseqerrcond}.  Given  a positive error tolerance, $\varepsilon$, and  an input function $f \in \cc_{\tau}$, do the following:

\begin{description} 

\item[Stage 1.\ Bound {$\Fnorm{f}$}.] First compute $\tF_{n_{\tF}}(f)$.  Define the inflation factor $\fC=\fC_{n_{\tF}}$ according to \eqref{norminflate}.
Then $\tau \fC \tF_{n_{\tF}}(f)$ is a reliable upper bound on $\Fnorm{f}$.  

\item [Stage 2.\ Estimate {$S(f)$}.] Choose the sample size needed to approximate $S(f)$, namely, $n_A=h^{-1}(\varepsilon/(\tau\fC \tF_{n_{\tF}}(f)))$.  Finally, return $A_{n_A}(f)$ as the approximation to $S(f)$ at a total cost of $n_{\tF}+n_A$. 
\end{description}
\end{algo}

The bounds in Lemma \ref{Gnormlem} involving $\tF_n$ imply bounds on the cost of the algorithm above.  Since $h^{-1}$ is non-increasing, it follows that for all $f \in \cc_{\tau}$,
\begin{equation*}
h^{-1}\left(\frac{\varepsilon}{\Fnorm{f}}\right) \le  h^{-1}\left(\frac{\varepsilon}{\tau\Ftnorm{f}}\right)  \le h^{-1}\left(\frac{\varepsilon}{\tau \fC \tF_{n_{\tF}}(f)}\right) \le 
\begin{cases} \displaystyle  h^{-1}\left(\frac{\varepsilon}{\tau \fC \tfc \Ftnorm{f}}\right) \\[2ex]
\displaystyle  h^{-1}\left(\frac{\tau_{\min} \varepsilon}{\tau \fC \fc \Fnorm{f}}\right)\end{cases}.
\end{equation*}

\begin{theorem}  \label{TwoStageDetermThm}  Let $\cf$, $\Fnorm{\cdot}$, $\Ftnorm{\cdot}$, $\cg$, $\Gnorm{\cdot}$, and $S$, and $\cc_\tau$ be as described above.  Under the assumptions of Algorithm \ref{twostagedetalgo}, let $\fc=\fc_{n_{\tF}}$ be defined as in \eqref{normdeflate}.  Then Algorithm \ref{twostagedetalgo}, which lies in $\ca(\cc_{\tau},\cg,S,\Lambda)$, is successful,  i.e.,  $\norm[\cg]{S(f)-A(f,\varepsilon)} \le \varepsilon$ for all $f \in \cc_\tau$.  Moreover, the cost of this algorithm is bounded above and below in terms of the unknown $\tcf$- and $\cf$-semi-norms of any input function in $\cc_{\tau}$ as follows:
\begin{multline}  \label{auto2stagedetcost}
n_{\tF}+ h^{-1}\left(\frac{\varepsilon}{\Fnorm{f}} \right)  \le n_{\tF}+ h^{-1}\left(\frac{\varepsilon}{\tau \Ftnorm{f}} \right) \\ \le 
\cost(A,f,\varepsilon)
\le \begin{cases}
\displaystyle n_{\tF}+ h^{-1}\left(\frac{\varepsilon}{\tau \fC\tfc\Ftnorm{f}}\right) \\[2ex]
\displaystyle n_{\tF}+h^{-1}\left(\frac{\tau_{\min}\varepsilon}{\tau \fC\fc\Fnorm{f}}\right)
\end{cases}.
\end{multline}
This algorithm is computationally stable in the sense that the maximum cost is no greater than some constant times the minimum cost, both for $\tcf$-balls and $\cf$-balls.
\end{theorem}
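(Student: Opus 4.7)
My plan is to split the theorem into its three claims---success of the output, the cost sandwich \eqref{auto2stagedetcost}, and the stability statement---and handle each by chaining inequalities already in hand from \eqref{algoerr}, Lemma \ref{Gnormlem}, and the hypothesis \eqref{hinvonetwocond}. Membership of Algorithm \ref{twostagedetalgo} in $\ca(\cc_\tau,\cg,S,\Lambda)$ is immediate: both $\tF_{n_{\tF}}$ and $A_{n_A}$ use only admissible functionals, and they are composed deterministically.

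For success, I would instantiate the base error bound \eqref{traditionerra} at $n=n_A$ to obtain $\Gnorm{S(f)-A_{n_A}(f)}\le h(n_A)\Fnorm{f}$, then substitute the Stage 1 upper bound $\Fnorm{f}\le \tau\fC\,\tF_{n_{\tF}}(f)$ guaranteed by Lemma \ref{Gnormlem} (valid because $h_+(n_{\tF})<1/\tau$ and $f\in\cc_\tau$). The definition of $n_A$ together with \eqref{hinvdef} forces $h(n_A)\le \varepsilon/(\tau\fC\,\tF_{n_{\tF}}(f))$, and the three factors collapse to $\varepsilon$.

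For the cost bounds I would reread Lemma \ref{Gnormlem} in the opposite direction. Its left-hand inequalities give $\tau\fC\,\tF_{n_{\tF}}(f)\ge\tau\Ftnorm{f}\ge\Fnorm{f}$, so applying the non-increasing $h^{-1}$ to $\varepsilon$ divided by these quantities recovers the two lower bounds in \eqref{auto2stagedetcost}. The right-hand inequalities $\tF_{n_{\tF}}(f)\le\tfc\Ftnorm{f}$ and $\tF_{n_{\tF}}(f)\le\fc\Fnorm{f}/\tau_{\min}$, again via monotonicity of $h^{-1}$, supply the two upper bounds. Adding the fixed Stage 1 cost $n_{\tF}$ throughout finishes this step.

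The stability claim is the one place I expect the work to be nontrivial: I must convert the multiplicative gap between the upper and lower arguments of $h^{-1}$---a bounded factor $K$ built from $\tau/\tau_{\min}$, $\fC$, $\fc$, and $\tfc$---into a bounded multiplicative gap between the $h^{-1}$ values themselves. The lever for this is \eqref{hinvonetwocond}, which states that halving the argument of $h^{-1}$ inflates it by at most a constant $C$. Iterating this $\lceil\log_2 K\rceil$ times yields $h^{-1}(\eta)\le C^{\lceil\log_2 K\rceil}\max(1,h^{-1}(K\eta))$ uniformly in $\eta>0$, which bounds the maxcost-to-mincost ratio by a constant for both $\cf$-balls and $\tcf$-balls. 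Taking $\sup_{\varepsilon,s>0}$ then delivers the required stability. After that, the $n_{\tF}$ term in the numerator and denominator of the ratio is harmless because it is a fixed constant independent of $\varepsilon$ and $s$.
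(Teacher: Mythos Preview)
Your proposal is correct and follows essentially the same route as the paper. The paper's own proof is terse: it cites \eqref{Anerrbound} for success, the displayed chain of $h^{-1}$ inequalities immediately preceding the theorem for the cost bounds, and dismisses stability with ``follows since $h$ satisfies \eqref{algseqerrcond}.'' Your treatment of the stability claim---iterating \eqref{hinvonetwocond} $\lceil\log_2 K\rceil$ times to control $h^{-1}(\eta/K)$ by a constant multiple of $\max(1,h^{-1}(\eta))$---is more explicit than the paper's one-line reference, but it is exactly the mechanism the paper intends.
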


\begin{proof} The choice of $n_A$ in Algorithm \ref{twostagedetalgo} ensures that the right hand side of \eqref{Anerrbound} is no greater than the error tolerance, so the algorithm is successful, as claimed in the theorem.  The argument preceding this theorem establishes the two-sided cost bounds in \eqref{auto2stagedetcost}. The computational stability follows since $h$ satisfies \eqref{algseqerrcond}.
\end{proof}

There are several points to note about this result.

\begin{rem} This algorithm and its accompanying theorem assume the existence of fixed-cost algorithms for approximating the weaker semi-norm and for approximating the solution, both with known error bounds. Sections \ref{integsec} and \ref{approxsec} provide concrete examples where these conditions are satisfied.
\end{rem}

\begin{rem} The maximum and minimum costs of Algorithm \ref{twostagedetalgo} in \eqref{auto2stagedetcost} depend on the $\cf$- and $\tcf$-semi-norms of the input function, $f$.  However, the semi-norms of $f$ are not input to the algorithm, but rather are bounded by the algorithm.  The number of samples needed by Algorithm \ref{twostagedetalgo} is adjusted adaptively based on these bounds.
\end{rem}

\begin{rem}  Although non-adaptive Algorithm \ref{nonadaptalgo} and adaptive Algorithms \ref{TwoStageDetermThm} and \ref{MultiStageThm} are defined only for proper subsets of $\cf$, they may actually be applied to all $f\in \cf$ since the fixed-cost algorithms on which they are based are defined for all $f\in \cf$.  If the user unknowingly provides an input $f$ that does not belong to $\cb_\sigma$ for Algorithm \ref{nonadaptalgo} or $\cc_{\tau}$ for Algorithms \ref{TwoStageDetermThm} and \ref{MultiStageThm}, the answer returned may be wrong because the corresponding Theorem \ref{NonAdaptDetermThm}, \ref{TwoStageDetermThm}, or \ref{MultiStageThm} does not apply.
\end{rem}

\begin{rem} \label{neccondrem} In some cases it is possible to find a lower bound on the $\cf$-semi-norm of the input function, i.e., an algorithm $F_n$ using the same function values as $\tF_n$, such that
\[
F_n(f) \le \Fnorm{f} \qquad \forall f \in \cf.
\]
When such an $F_n$ is known, Lemma \ref{Gnormlem} can be used to derive a necessary condition that $f$ lies in the cone $\cc_\tau$:
\begin{align}
\nonumber
f \in \cc_\tau 
& \implies F_n(f) \le \Fnorm{f} \le \frac{\tau \tF_n(f)}{1-\tau h_+(n)}\\
& \implies \tau_{\min,n}:= \frac{F_n(f)}{\tF_n(f) + h_+(n) F_n(f)} \le \tau.
\label{neccondFn}
\end{align}
For Algorithm \ref{twostagedetalgo} the relevant value of $n$ is $n_{\tF}$, whereas for Algorithm \ref{multistagealgo} the relevant value of $n$ is $n_i$.
Condition \eqref{neccondFn} is not sufficient for $f$ to lie in $\cc_{\tau}$, so Algorithm \ref{twostagedetalgo} or \ref{multistagealgo} may yield an incorrect answer even if \eqref{neccondFn} is satisfied but $f \notin \cc_{\tau}$.  However, this argument suggests modifying Algorithms \ref{twostagedetalgo} and \ref{multistagealgo} by increasing $\tau$ to $2 \tau_{\min,n}$ whenever  $\tau_{\min,n}$ rises above $\tau$. 
\end{rem}

\begin{rem} \label{Nmaxrem}  For practical reasons one may impose a computational cost budget, $N_{\max}$.  If this is done, Algorithm \ref{twostagedetalgo} will compute the correct answer within budget for $f \in \cc_{\tau}$ if either of the cost upper bounds in Theorem \ref{TwoStageDetermThm} does not exceed $N_{\max}$.  An analogous result holds for Algorithm \ref{multistagealgo} and Theorem \ref{MultiStageThm}.
\end{rem}

\subsection{Adaptive Algorithms Based on Embedded Algorithms}

Suppose that $\{A_n\}_{n \in \ci}$, $A_n  \in \ca_{\fix}(\cf,\cg,S,\Lambda)$ now have the added property that some are embedded in others, as mentioned in Section \ref{adapintrosec}.  Let $r$ be the cost multiple described there.  Moreover, suppose that each $\tF_n$ uses the same data as $A_n$. These embedded algorithms suggest the following iterative adaptive algorithm.

\begin{algo}[Adaptive, Automatic, Multi-Stage] \label{multistagealgo}  Let the sequences of algorithms $\{A_n\}_{n \in \ci}$ and  $\{\tF_n\}_{n \in \ci}$ be as described above.  Let $\tau$ be the positive cone constant, and let $\cc_\tau$ be the cone of functions defined in \eqref{conedef} whose $\cf$-semi-norms are no larger than $\tau$ times their $\tcf$-semi-norms. Set $i=1$, and $n_1 = \min\{ n \in \ci : h_+(n) < 1/\tau\}$. For any positive error tolerance $\varepsilon$ and any input function $f$, do the following:
\begin{description}

\item [Stage 1. Estimate $\Ftnorm{f}$.] Compute $\tF_{n_i}(f)$ and $\fC_{n_i}$ as defined in \eqref{norminflate}.  

\item [Stage 2. Check for Convergence.] Check whether $n_i$ is large enough to satisfy the error tolerance, i.e., 
\begin{equation} \label{multistageconv}
\tau \fC_{n_i} h(n_i)\tF_{n_i}(f) \le \varepsilon.
\end{equation}
If this is true, return $A_{n_i}(f)$ and terminate the algorithm.

\item[Stage 3. Compute $n_{i+1}$.]  Otherwise, if \eqref{multistageconv} fails to hold, compute $\tfc_{n_i}$ according to \eqref{normdeflate}, and choose $n_{i+1}$ as the smallest number exceeding $n_i$ and not less than $h^{-1}(\varepsilon \tfc_{n_i}/[\tau \tF_{n_i}(f)])$ such that $A_{n_{i}}$ is embedded in $A_{n_{i+1}}$. Increment $i$ by $1$, and return to Stage 1.  

\end{description}  
\end{algo}

This iterative algorithm is guaranteed to converge also, and its cost can be bounded.  Define 
\begin{equation*} \label{hhdef}
h_1(n) := \fC_n\tfc_n h(n) \ge h(n), \quad h_2(n) := \fC_n\fc_n h(n) \ge h(n) \qquad n \in \ci,
\end{equation*}
and note that $h_1$ and $h_2$ are non-increasing functions.  Let $h_1^{-1}$ and $h_2^{-1}$ be defined analogously to $h^{-1}$ as in \eqref{hinvdef}.  These definitions imply that the quantity appearing on the left hand side in \eqref{multistageconv}, has the following upper and lower bounds based on \eqref{twosidedGineq} for $f \in \cc_{\tau}$:
\begin{subequations}\label{hh1h2bounds}
\begin{equation}
h(n)\Fnorm{f} \le \tau h(n)\Ftnorm{f} \le \tau \fC_{n} h(n)\tF_{n}(f) \le \begin{cases} \tau h_1(n) \Ftnorm{f} \\[1ex]
\displaystyle \frac{\tau h_2(n) \Fnorm{f} }{\tau_{\min}}
\end{cases} , \quad n \in \ci,
\end{equation}
\begin{multline}
h^{-1}\left(\frac{\varepsilon}{\Fnorm{f}}\right) \le h^{-1}\left(\frac{\varepsilon}{\tau \Ftnorm{f}}\right) \le \min\{n : \tau \fC_{n} h(n)\tF_{n}(f) \le \varepsilon\} \\
\le \begin{cases} \displaystyle h_1^{-1}\left(\frac{\varepsilon}{\tau \Ftnorm{f}} \right) \\[2ex]
\displaystyle h_2^{-1}\left(\frac{\tau_{\min}\varepsilon}{\tau \Fnorm{f}} \right)
\end{cases} , \qquad \varepsilon > 0.
\end{multline}
\end{subequations}
These inequalities may be used to prove the following theorem about Algorithm \ref{multistagealgo}, which is analogous to Theorem \ref{TwoStageDetermThm}.

\begin{theorem}  \label{MultiStageThm}  Let $\cf$, $\Fnorm{\cdot}$, $\Ftnorm{\cdot}$, $\cg$, $\Gnorm{\cdot}$, and $S$ be as described above.  Under the assumptions of Algorithm \ref{multistagealgo}, let $r$ be the cost multiple described in Section \ref{adapintrosec}.  
Then it follows that Algorithm \ref{multistagealgo}, which lies in $\ca(\cc_{\tau},\cg,S,\Lambda)$, is successful,  i.e.,  $\norm[\cg]{S(f)-A(f,\varepsilon)} \le \varepsilon$ for all $f \in \cc_\tau$.  Moreover, the cost of this algorithm is bounded above and below in terms of the unknown $\tcf$- and $\cf$-semi-norms of the input function as follows:
\begin{multline} \label{automultistagedetcost}
\max \left(n_1, h^{-1}\left(\frac{\varepsilon}{\tau \Ftnorm{f}} \right) \right) \le \max \left(n_1, h^{-1}\left(\frac{\varepsilon}{\Fnorm{f}} \right) \right) \\ \le
\cost(A,f,\varepsilon)
\le \begin{cases}\displaystyle \max \left(n_1, rh_1^{-1}\left(\frac{\varepsilon}{\tau \Ftnorm{f}} \right) \right)\\[2ex]
\displaystyle \max \left(n_1, rh_2^{-1}\left(\frac{\tau_{\min}\varepsilon}{\tau \Fnorm{f}} \right) \right)
\end{cases}.
\end{multline}
This algorithm is computationally stable in the sense that the maximum cost is no greater than some constant times the minimum cost, both for $\tcf$-balls and $\cf$-balls.
\end{theorem}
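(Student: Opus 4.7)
The plan follows the template of Theorem \ref{TwoStageDetermThm}: verify success via the stopping criterion, derive matching upper and lower cost bounds, then read off stability. The double inequalities \eqref{hh1h2bounds}, which repackage the two-sided bounds of Lemma \ref{Gnormlem} together with the error bound \eqref{Anerrbound}, will do most of the work.

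First I would handle termination and success together. The Stage 3 rule enforces $n_{i+1} > n_i$ in the discrete set $\ci$, and $\ci$ contains arbitrarily large elements (since $\inf h = 0$), so iteration must eventually produce an $n_i$ triggering \eqref{multistageconv}; let $i^{*}$ be the terminating iteration. Then \eqref{Anerrbound} combined with \eqref{multistageconv} at $i=i^{*}$ gives
\[
\Gnorm{S(f)-A_{n_{i^{*}}}(f)} \le h(n_{i^{*}})\Fnorm{f} \le \tau \fC_{n_{i^{*}}} h(n_{i^{*}}) \tF_{n_{i^{*}}}(f) \le \varepsilon.
\]

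Next I would obtain the cost bounds around the ``ideal'' threshold $n^{*} := h_{1}^{-1}(\varepsilon/(\tau\Ftnorm{f}))$. The upper half of \eqref{hh1h2bounds}(a) shows that the left side of \eqref{multistageconv} is bounded by $\tau h_{1}(n_{i})\Ftnorm{f}$, which is $\le\varepsilon$ whenever $n_{i}\ge n^{*}$, so termination is forced by that iteration and $n_{i^{*}-1} < n^{*}$ when $i^{*}>1$. For the \emph{lower} cost bound, substituting $\tF_{n_{i^{*}}}(f)\ge\Ftnorm{f}/\fC_{n_{i^{*}}}$ into \eqref{multistageconv} yields $\tau h(n_{i^{*}})\Ftnorm{f}\le\varepsilon$, hence $n_{i^{*}}\ge h^{-1}(\varepsilon/(\tau\Ftnorm{f}))$, and the alternative form follows from $\tau\Ftnorm{f}\ge\Fnorm{f}$. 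For the \emph{upper} cost bound, Stage 3's target $M_{i^{*}-1}:= h^{-1}(\varepsilon\tfc_{n_{i^{*}-1}}/[\tau\tF_{n_{i^{*}-1}}(f)])$ satisfies $M_{i^{*}-1}\le h^{-1}(\varepsilon/(\tau\Ftnorm{f}))\le n^{*}$, where the first inequality uses $\tF_{n_{i^{*}-1}}(f)\le\tfc_{n_{i^{*}-1}}\Ftnorm{f}$ and the second uses $h_{1}\ge h$. Since both $n_{i^{*}-1}+1$ and $M_{i^{*}-1}$ lie at or below $n^{*}$, iterating the embedding hypothesis starting from $n_{i^{*}-1}$ produces a chain of embedded indices whose successive ratios are at most $r$, and the smallest embedded index above $\max(n_{i^{*}-1}+1, M_{i^{*}-1})$ therefore cannot exceed $r n^{*}$; this gives $n_{i^{*}} \le r n^{*}$. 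The parallel upper bound involving $h_{2}^{-1}$ and $\Fnorm{f}$ follows by substituting $\tF_{n}(f)\le \fc_{n}\Fnorm{f}/\tau_{\min}$ instead.

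The main obstacle is this final chaining argument: because Stage 3 may in principle request a jump well beyond $r$ times the current $n_{i-1}$, one must carefully certify that the data-driven target $M_{i-1}$ is itself controlled by $n^{*}$ \emph{before} invoking the embedding multiplier, and one must confirm that the embedding hypothesis, which only produces a single next-step index of size at most $r n$, can be composed transitively to reach a prescribed level while retaining the multiplicative control. Once \eqref{automultistagedetcost} is established, stability is routine: the ratio of the upper and lower bounds reduces to $r\,h_{1}^{-1}/h^{-1}$ (or $r\,h_{2}^{-1}/h^{-1}$), and since $\fC_{n}$, $\tfc_{n}$, $\fc_{n}$ are non-increasing in $n$ and already finite at $n_{1}$ (by the choice $h_{+}(n_{1})<1/\tau$), the polynomial decay hypothesis \eqref{hinvonetwocond} forces these ratios to be uniformly bounded in $\varepsilon$ and $f$.
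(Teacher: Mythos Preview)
Your proposal is correct and follows essentially the same route as the paper: both arguments feed the two-sided estimates \eqref{hh1h2bounds} into the stopping criterion \eqref{multistageconv} to get the lower cost bound, and both bound the terminal sample size by the embedding multiplier $r$ times either $n_i$ or the data-driven target $h^{-1}(\varepsilon\tfc_{n_i}/[\tau\tF_{n_i}(f)])$. The only organizational difference is that the paper maintains the invariant that \emph{every} $n_{i+1}$ at a non-terminating step already obeys the upper bound in \eqref{automultistagedetcost} (splitting into the cases $n^{*}_{i+1}\le n_i$ and $n^{*}_{i+1}>n_i$), whereas you argue only for the terminal index $n_{i^{*}}$ and treat both cases uniformly via the $\max$; your explicit transitive-chaining remark is precisely what the paper compresses into ``by the definition of $r$.''
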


\begin{proof} Let $n_1, n_2, \ldots$ be the sequence of $n_i$ generated by Algorithm \ref{multistagealgo}.  We shall prove that the following statements must be true:
\begin{enumerate}
\renewcommand{\labelenumi}{\roman{enumi})}
\item If the convergence criterion \eqref{multistageconv} is satisfied for $i$, then the algorithm stops, $A_{n_i}(f)$ is returned as the answer, and it meets the error tolerance.

\item If the convergence criterion \eqref{multistageconv} is not satisfied for $i$, then $n_{i+1}$ does not exceed the cost upper bounds in \eqref{automultistagedetcost}.

\end{enumerate}
Statement i) holds because of the bounds in \eqref{Anerrbound} and in Lemma \ref{Gnormlem}.

If \eqref{multistageconv} is not satisfied for $i$, then it follows from  the inequality in \eqref{hh1h2bounds} that 
\[
n_{i} \le h_1^{-1}\left(\frac{\varepsilon}{\tau\Ftnorm{f}}\right) \qquad \text{and} \qquad 
n_{i} \le h_2^{-1}\left(\frac{\tau_{\min}\varepsilon}{\tau\Fnorm{f}}\right).
\] 
The algorithm then considers the candidate $n^*_{i+1} = h^{-1}(\varepsilon \tfc_{n_{i}}/[\tau \tF_{n_{i}}(f)])$ as a possible choice for $n_{i+1}$.  If $n^*_{i+1} \le n_{i}$, then $n^*_{i+1}$ is a bad choice for $n_{i+1}$, and Stage 3 chooses $n_{i+1}$ to be the smallest element of $\ci$ that exceeds $n_{i}$ and for which $A_{n_{i}}$ is embedded in $A_{n_{i+1}}$.  By the definition of $r$ it follows that $n_{i+1} \le r  n_{i}$, and so by the above inequalities for $n_{i}$, it follows that $n_{i+1}$ is bounded above by the right hand sides of the inequalities in \eqref{automultistagedetcost}.

If, on the other hand, $n^*_{i+1} > n_{i}$, then Stage 3 chooses $n_{i+1}$ to be the smallest element of $\ci$ that is no less than $n^*_{i+1}$ and for which $A_{n_{i+1}}$ is embedded in $A_{n_i}$.  By the definition of $r$, \eqref{twosidedGineq}, and the inequalities in \eqref{hh1h2bounds}, it follows that
\begin{equation*}
n_{i+1} < r n^*_{i+1} = r h^{-1}\left(\frac{\varepsilon \tfc_{n_{i}}}{\tau \tF_{n_{i}}(f)}\right) \le r  h^{-1}\left(\frac{\varepsilon}{\tau \Ftnorm{f}}\right) \le
\begin{cases}
\displaystyle  rh_1^{-1}\left(\frac{\varepsilon}{\tau \Ftnorm{f}} \right) \\[2ex]
\displaystyle rh_2^{-1}\left(\frac{\tau_{\min}\varepsilon}{\tau \Fnorm{f}} \right)
\end{cases}.
\end{equation*}
Again, $n_{i+1}$ is bounded above by the right hand sides of the inequalities in \eqref{automultistagedetcost}.

Since statement ii) now holds, the right hand side inequalities in \eqref{automultistagedetcost} also hold for cost of the algorithm.  The lower bounds on the computational cost follow from \eqref{hh1h2bounds}.  The computational stability follows since $h$ satisfies \eqref{algseqerrcond}.
\end{proof}

\section{Lower Complexity Bounds for the Problems} \label{LowBoundSec}
Lower complexity bounds are typically proved by constructing fooling functions.  Here we first derive a lower bound for the complexity of problems defined on an $\cf$-semi-norm ball of input functions, $\cb_\sigma$.  This technique is generally known, see for example \cite[p.\ 11--12]{TraWer98}.  Then it is shown how to extend this idea for the cone $\cc_{\tau}$. 

Let $\cj$ be a subset of $\natzero$.  Suppose that for any $n \in \cj$, and for all $\vL \in \Lambda^m$, satisfying $\$(\vL)\le n$, there exists an $f_1 \in \cf$, depending on $n$ and the $L_i$, with zero data, $\cf$-semi-norm smaller than one, and known lower bound on the solution, namely,
\begin{equation} \label{assumpfone}
\tau_{\min}\Ftnorm{f_1} \le \Fnorm{f_1} \le 1, \qquad \vL(f_1)= \vzero, \qquad
\norm[\cg]{S(f_1)} \ge g(n), 
\end{equation}
for some non-increasing function $g:\cj \to (0,+\infty)$ with $\inf_{n \in \cj} g(n)=0$.  For example, one might have $g(n)=a n^{-p}$ for $n \in \naturals$ with positive $a$ and $p$.

\subsection{Problems Defined on Balls} 
Suppose that $A$ is any successful automatic algorithm for the ball $\cb_{\sigma}$, i.e., $A \in \ca(\cb_{\sigma},\cg,S,\Lambda)$, and $\Gnorm{S(f)-A(f)}\le \varepsilon$ for all $f \in \cb_{\sigma}$.  For any fixed $s \le \sigma$ and $\varepsilon>0$, let $\vL$ be the design used by $A$ for the zero function.  Let $N_{j}$ and $N_{j+1}$ be two successive elements of $\cj$ with $N_{j}+1 \le \$(\vL) \le N_{j+1}$.

Let $f_1$ be constructed according to \eqref{assumpfone} for this $\vL$.  Since the data for the functions $\pm s f_1$ are all zero, it follows that $A(s f_1)=A(-s f_1)$.  Also note that $\pm s f_1 \in \cb_{s}$.  Since $A$ must be successful for $\pm s f_1$, it follows that 
\begin{align*}
\varepsilon & \ge \max(\norm[\cg]{S(sf_1)-A(sf_1)},\norm[\cg]{S(-sf_1)-A(-sf_1)}) \\
& \ge \frac{1}{2} \left[ \norm[\cg]{S(sf_1)-A(sf_1)}+ \norm[\cg]{S(sf_1)+A(sf_1)} \right] \\
& \ge \frac{1}{2} \norm[\cg]{[S(sf_1)-A(sf_1)]+[S(sf_1)+A(sf_1)]} \\
& = \norm[\cg]{S(sf_1)}= s\norm[\cg]{S(f_1)}  \\
& \ge s g(N_{j+1}).
\end{align*}
Since $\pm s f_1 \in \cb_s$, it follows that $N_{j}+1 \le \$(\vL) \le \maxcost(A,\cb_{\sigma},\varepsilon,\cb_{s})$.  The inequality $g(N_{j+1}) \le \varepsilon/s$ implies that $N_{j}$ can be no smaller than the largest $n \in \cj$ with $g(n) > \varepsilon/s$.  Thus, $g^{-1}(\varepsilon/s) \le N_{j}+1 \le \maxcost(A,\cb_{\sigma},\varepsilon,\cb_{s})$, where $g^{-1}$ is defined by
\begin{equation*} \label{ginvdef}
g^{-1}(\varepsilon) = \max \{ n \in \cj : g(n) > \varepsilon\}+1.
\end{equation*}
Here the maximum of the empty set is assumed to be $-1$.

\begin{theorem} \label{complowbdball} The computational complexity of the problem for a ball of input functions $\cb_{\sigma}$ is bounded below by
\begin{equation*}
\comp(\varepsilon,\ca(\cb_{\sigma},\cg,S,\Lambda),\cb_s) \ge
g ^{-1}(\varepsilon/\min(\sigma,s)).
\end{equation*}
\end{theorem}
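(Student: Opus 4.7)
The plan is to run the two-function fooling argument sketched in the paragraphs immediately preceding the theorem; the only genuine extension is that the fooling pair must simultaneously lie in the algorithm's admissible set $\cb_\sigma$ and in the cost-measuring ball $\cb_s$, which is exactly what forces the $\min(\sigma,s)$ on the right-hand side. Fix any successful $A\in\ca(\cb_\sigma,\cg,S,\Lambda)$ and any $\varepsilon>0$, set $s':=\min(\sigma,s)$ so that $\cb_{s'}=\cb_\sigma\cap\cb_s$, and aim to show $\maxcost(A,\cb_\sigma,\varepsilon,\cb_s)\ge g^{-1}(\varepsilon/s')$; taking the infimum over all successful $A$ will then yield the complexity bound.

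First I would identify the information vector $\vL$ that $A$ adaptively queries when run on the zero function, and let $N_j,N_{j+1}\in\cj$ be successive elements bracketing the cost, i.e., $N_j+1\le\$(\vL)\le N_{j+1}$ (with the convention $N_j=-1$ if $\$(\vL)$ precedes every element of $\cj$, matching the convention in the definition of $g^{-1}$). Then I would invoke hypothesis \eqref{assumpfone} with $n=N_{j+1}$ to produce a fooling function $f_1\in\cf$ satisfying $\Fnorm{f_1}\le 1$, $\vL(f_1)=\vzero$, and $\Gnorm{S(f_1)}\ge g(N_{j+1})$. The pair $\pm s'f_1$ lies in $\cb_{s'}=\cb_\sigma\cap\cb_s$, and by homogeneity of each $L_i$ the queried data on the two inputs agree (both vanish), so determinism of $A$ forces $A(s'f_1,\varepsilon)=A(-s'f_1,\varepsilon)$.

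The key quantitative step is the triangle-inequality manipulation already displayed in the excerpt: success of $A$ on both fooling inputs together with $A(s'f_1)=A(-s'f_1)$ and $S(-s'f_1)=-S(s'f_1)$ yields $\varepsilon\ge\Gnorm{S(s'f_1)}=s'\Gnorm{S(f_1)}\ge s' g(N_{j+1})$, hence $g(N_{j+1})\le\varepsilon/s'$. By the definition of $g^{-1}$, no $n\ge N_{j+1}$ satisfies $g(n)>\varepsilon/s'$, so the largest such element of $\cj$ is at most $N_j$, giving $g^{-1}(\varepsilon/s')\le N_j+1\le\$(\vL)$. Since $\pm s'f_1\in\cb_\sigma\cap\cb_s$, this cost is counted by $\maxcost(A,\cb_\sigma,\varepsilon,\cb_s)$, completing the bound for an arbitrary successful $A$.

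The one step that merits care -- and is the only spot I expect real friction -- is the antisymmetry $S(-s'f_1)=-S(s'f_1)$: positive homogeneity as stated in Section \ref{probdefsec} only guarantees $S(cf)=cS(f)$ for $c\ge 0$, so this identity is automatic for linear problems such as the integration and function-recovery examples of Sections \ref{integsec} and \ref{approxsec}, but for a merely positively homogeneous (possibly nonlinear) $S$ one would have to replace the pair $\pm s'f_1$ with the pair $(s'f_1,0)$ and accept an extra factor of $2$ on the right-hand side. The remaining bookkeeping -- handling the edge case $\$(\vL)<N_1$ via the $-1$ convention, and verifying that $s'=\min(\sigma,s)$ is the largest scaling consistent with both ball constraints -- is routine.
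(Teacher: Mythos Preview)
Your proposal is correct and follows essentially the same argument as the paper: the paper's proof is precisely the paragraphs preceding the theorem, which run $A$ on the zero function, bracket its cost by consecutive elements $N_j,N_{j+1}\in\cj$, invoke \eqref{assumpfone} at $n=N_{j+1}$, and use the triangle-inequality chain on $\pm s f_1$; the only cosmetic difference is that the paper writes the argument assuming $s\le\sigma$ and leaves the replacement $s\mapsto\min(\sigma,s)$ implicit, whereas you make it explicit via $s'$. Your caveat about $S(-f)=-S(f)$ is well taken and applies equally to the paper's own argument, which silently uses this identity in passing from $\Gnorm{S(-sf_1)-A(-sf_1)}$ to $\Gnorm{S(sf_1)+A(sf_1)}$; indeed the paper only asserts linearity explicitly for the cone version in Theorem~\ref{complowbd}.
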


The lower bound in this theorem and the upper bound in Theorem \ref{NonAdaptDetermThm} lead to a simple condition that guarantees the optimality of Algorithm \ref{nonadaptalgo}.  One only need to look at the ratio $h(n)/g(n)$.

\begin{theorem} \label{optimalprop} Suppose that there exist fixed-cost algorithms $\{A_n\}_{n \in \ci}$, with $A_n  \in \ca_{\fix}(\cf,\cg,S,\Lambda)$, for which the upper error bounds satisfy \eqref{algoerr} for known $h$ defined on $\ci$, which satisfies \eqref{Inobiggap}.  Suppose also that $g$ is defined on $\cj \supseteq \ci$. If 
\begin{equation}\label{ghcond}
\sup_{n \in \ci} \frac{h(n)}{g(n)} < \infty,
\end{equation}
then Algorithm \ref{nonadaptalgo} has optimal order in the sense that for fixed $\sigma$ and $s$, 
\begin{equation*} \label{balloptresult}
\sup_{\varepsilon > 0} \frac{\maxcost(A,\cb_{\sigma},\varepsilon,\cb_{s})}
{\max(1,\comp(\varepsilon,\ca(\cb_{\sigma},\cg,S,\Lambda),\cb_s))} <\infty.
\end{equation*}
\end{theorem}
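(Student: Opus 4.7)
The plan is to substitute the closed-form $\maxcost$ for Algorithm \ref{nonadaptalgo} from \eqref{nonadaptalgocost} and the complexity lower bound from Theorem \ref{complowbdball}, and then bound the ratio $h^{-1}(\varepsilon/\sigma)/\max(1,g^{-1}(\varepsilon/\min(\sigma,s)))$ uniformly in $\varepsilon>0$, using the hypothesis \eqref{ghcond} together with the doubling property \eqref{hinvonetwocond}. The case $s>\sigma$ reduces trivially to $s=\sigma$, since $\cb_\sigma\cap\cb_s=\cb_\sigma$ and $\min(\sigma,s)=\sigma$.

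The first step I would carry out is the key comparison $h^{-1}(C\eta)\le \rho\,g^{-1}(\eta)$ for every $\eta>0$, where $C:=\sup_{n\in\ci}h(n)/g(n)<\infty$ and $\rho$ is the index-gap constant from \eqref{Inobiggap}. Setting $m:=g^{-1}(\eta)$, the definition of $g^{-1}$ forces $g(n)\le\eta$ for every $n\in\cj\supseteq\ci$ with $n\ge m$, and then \eqref{ghcond} yields $h(n)\le Cg(n)\le C\eta$. Condition \eqref{Inobiggap} guarantees that the smallest element of $\ci$ lying at or above $m$ is at most $\rho\,m$, and that element therefore witnesses $h^{-1}(C\eta)\le\rho\,g^{-1}(\eta)$; small-index edge cases are absorbed into the eventual $\max(1,\cdot)$ in the stability ratio.

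The second step is to apply the bounded-doubling property \eqref{hinvonetwocond} iteratively. With $K_0:=\sup_\varepsilon h^{-1}(\varepsilon)/\max(1,h^{-1}(2\varepsilon))$, a short induction gives $h^{-1}(\eta)\le K_0^{k}\max(1,h^{-1}(2^k\eta))$ for every nonnegative integer $k$. Choosing the fixed integer $k=\lceil\log_2\bigl(C\sigma/\min(\sigma,s)\bigr)\rceil$, which depends only on the fixed constants $\sigma$, $s$, and $C$, ensures $2^k(\varepsilon/\sigma)\ge C\varepsilon/\min(\sigma,s)$. Setting $\eta=\varepsilon/\sigma$ and using monotonicity of $h^{-1}$ together with the key comparison yields
\[
h^{-1}(\varepsilon/\sigma) \le K_0^{k}\max\!\bigl(1,\, h^{-1}(C\varepsilon/\min(\sigma,s))\bigr)\le K_0^{k}\max\!\bigl(1,\, \rho\,g^{-1}(\varepsilon/\min(\sigma,s))\bigr).
\]
Dividing by $\max(1,g^{-1}(\varepsilon/\min(\sigma,s)))$ and taking the supremum over $\varepsilon$ produces the finite bound $K_0^{k}\rho$, which is the required conclusion.

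I expect the main obstacle to be purely bookkeeping: keeping careful track of the small-$\eta$ regime in which $h^{-1}$ or $g^{-1}$ may vanish or lie below $1$, and in which the quantifier ``$i\ge 2$'' in \eqref{Inobiggap} forces a separate treatment of the first few indices. These are precisely the cases the $\max(1,\cdot)$ in the stability definition is designed to absorb, so the argument goes through without imposing any additional structural hypothesis on $g$ (such as monotonicity or a doubling condition of its own) beyond \eqref{ghcond}.
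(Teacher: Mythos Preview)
Your proposal is correct and follows essentially the same route as the paper: both establish the key comparison $h^{-1}(C\eta)\le\rho\,g^{-1}(\eta)$ from \eqref{ghcond} and \eqref{Inobiggap}, and then invoke the doubling property \eqref{hinvonetwocond} to pass from the argument $\varepsilon/\sigma$ to $C\varepsilon/\min(\sigma,s)$. The only difference is cosmetic: the paper leaves the iteration of \eqref{hinvonetwocond} implicit in a single ``$<\infty$'' line, whereas you spell out the induction and the explicit choice of $k$; your added remark that monotonicity of $g$ is not actually needed for the first step is true but moot, since the paper already assumes $g$ non-increasing.
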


\begin{proof}  Choose a number $C \ge \sup_{n \in \ci} h(n)/g(n)$.  It then follows that
\begin{align*}
g^{-1}(\varepsilon) &= \max \{ n \in \cj : g(n) > \varepsilon\}+1 \\ 
& \ge \frac{1}{\rho}\min\{ n \in \ci : g(n) \le \varepsilon\}  \\
& \ge \frac{1}{\rho} \min \left \{ n \in \ci : \frac{h(n)}{C} \le \varepsilon \right\} = \frac{h^{-1}(C \varepsilon)}{\rho}.
\end{align*}
Thus, it follows from the expression for the maximum cost in \eqref{nonadaptalgocost} and the condition on $h$ in \eqref{hinvonetwocond} that the error of Algorithm \ref{nonadaptalgo} is no worse than a constant times the best possible algorithm:
\begin{align*}
\MoveEqLeft{\sup_{\varepsilon \ge 0} \frac{\maxcost(A,\cb_{\sigma},\varepsilon,\cb_{s})}
{\max(1,\comp(\varepsilon,\ca(\cb_{\sigma},\cg,S,\Lambda),\cb_s))}} \\
& \le \sup_{\varepsilon \ge 0} \frac{h^{-1}(\varepsilon/\sigma)} {\max(1,g^{-1}(\varepsilon/\min(\sigma,s)))} \\
& \le \sup_{\varepsilon \ge 0} \frac{h^{-1}(\varepsilon/\sigma)} {\max(1,\rho^{-1} h^{-1}(C\varepsilon/\min(\sigma,s)))} <\infty.
\end{align*}
\end{proof}

\subsection{Problems Defined on Cones} \label{coneoptsubsec}

Now we turn to solving the numerical problem where the input functions lie in the cone $\cc_{\tau}$.  The cone condition makes the complexity lower bound more challenging to derive.  Moreover, we must now assume that the solution operator $S$ is \emph{linear}.  Condition \eqref{assumpfone} does not require the fooling function $f_1$ to lie \emph{inside} this cone.  To remedy this defect, fooling functions are constructed as linear combinations of $f_1$ and another function, $f_0$, lying in the interior of the cone.  Specifically, $f_0$ is assumed to satisfy
\begin{equation}
\label{assumpfzero}
\Ftnorm{f_{0}} = 1, \qquad \Fnorm{f_{0}} \le \tau_{\min} \Ftnorm{f_{0}} = \tau_{\min} < \tau,
\end{equation}
where $\tau_{\min}$ is defined in \eqref{Fspacecondstrong}.

\begin{theorem} \label{complowbd} Let $S$ be linear.  Assume that $\tau > \tau_{\min}$ and let $s$ be some positive number.  Suppose that functions $f_{0}$ and $f_1$ can be found that satisfy conditions \eqref{assumpfone} and \eqref{assumpfzero}.  It then follows that the complexity of the problem for cones of input functions is bounded below by
\begin{equation*}
\comp(\varepsilon,\ca(\cc_{\tau},\cg,S,\Lambda),\cb_{s}) 
\ge g^{-1}\left(\frac{2 \tau \varepsilon}{s(\tau-\tau_{\min})}\right).
\end{equation*}
\end{theorem}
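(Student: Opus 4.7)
The plan is to adapt the standard fooling-function argument used for balls to the cone setting by forming a balanced combination of the interior function $f_0$ and the blinding function $f_1$. Fix any successful $A\in\ca(\cc_\tau,\cg,S,\Lambda)$ and let $\vL^0$ denote the (possibly adaptive) design that $A$ selects on the input $cf_0$, where $c$ is chosen below. Let $N_{j+1}$ be the least element of $\cj$ with $\$(\vL^0)\le N_{j+1}$. Applying hypothesis \eqref{assumpfone} to this $\vL^0$ with $n=N_{j+1}$, I obtain a function $f_1$ satisfying $\vL^0(f_1)=\vzero$, $\Fnorm{f_1}\le 1$, $\Ftnorm{f_1}\le 1/\tau_{\min}$, and $\Gnorm{S(f_1)}\ge g(N_{j+1})$.

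Next, I would set
\[
c=\frac{s(\tau+\tau_{\min})}{2\tau\tau_{\min}},\qquad d=\frac{s(\tau-\tau_{\min})}{2\tau},
\]
both positive since $\tau>\tau_{\min}$, and take the fooling pair $f_\pm := cf_0\pm df_1$. A triangle-inequality estimate for $\Fnorm{\cdot}$ and a reverse-triangle estimate for $\Ftnorm{\cdot}$, using the bounds on $f_0,f_1$ from \eqref{assumpfone}--\eqref{assumpfzero}, yield
\[
\Fnorm{f_\pm}\le c\tau_{\min}+d=s,\qquad \tau\Ftnorm{f_\pm}\ge\tau\bigl(c-d/\tau_{\min}\bigr)=s,
\]
so both $f_+$ and $f_-$ lie in $\cc_\tau\cap\cb_s$. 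The same computation places $cf_0\in\cc_\tau\cap\cb_s$, so $\$(\vL^0)\le\maxcost(A,\cc_\tau,\varepsilon,\cb_s)$.

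The key adaptive observation is that $\vL^0(f_1)=\vzero$ forces $A$ to collect identical data on $f_+$, $f_-$, and $cf_0$. Inductively, at each step the next functional $L_i$ chosen by $A$ on $f_\pm$ is the one it would have chosen on $cf_0$ (because all previous values agree), and since $L_i\in\vL^0$ we have $L_i(f_\pm)=cL_i(f_0)\pm dL_i(f_1)=cL_i(f_0)$. Hence $A(f_+)=A(f_-)$. Linearity of $S$ gives $S(f_+)-S(f_-)=2dS(f_1)$, and combining the triangle inequality in $\cg$ with the two error guarantees $\Gnorm{S(f_\pm)-A(f_\pm)}\le\varepsilon$ yields $2d\Gnorm{S(f_1)}\le 2\varepsilon$, hence $g(N_{j+1})\le\varepsilon/d=2\tau\varepsilon/[s(\tau-\tau_{\min})]$. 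Monotonicity of $g$ and the definition of $g^{-1}$ then force
\[
g^{-1}\!\left(\frac{2\tau\varepsilon}{s(\tau-\tau_{\min})}\right)\le N_j+1\le\$(\vL^0)\le\maxcost(A,\cc_\tau,\varepsilon,\cb_s),
\]
and taking the infimum over successful $A$ delivers the claimed bound.

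The main obstacle is the geometric tuning of $c$ and $d$: the fooling pair must simultaneously satisfy the cone condition (so that $A$ is actually required to succeed on them) and the ball condition (so that they count toward $\maxcost(\cdot,\cb_s)$), while making $d$ as large as possible so that the achievable gap $2d\Gnorm{S(f_1)}$ is maximized. The displayed choices of $c$ and $d$ saturate both constraints at once; any looser allocation would produce a weaker bound than $2\tau\varepsilon/[s(\tau-\tau_{\min})]$, and the factor $(\tau-\tau_{\min})$ in the denominator of $g^{-1}$'s argument reflects the shrinking of the admissible direction $f_1$ as the cone tightens toward the interior function $f_0$.
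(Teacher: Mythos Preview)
Your proof is correct and follows essentially the same route as the paper: you feed the algorithm $cf_0$, build the fooling pair $f_\pm=cf_0\pm df_1$ with the identical choices $c=\frac{s(\tau+\tau_{\min})}{2\tau\tau_{\min}}$ and $d=\frac{s(\tau-\tau_{\min})}{2\tau}$, verify $f_\pm\in\cc_\tau\cap\cb_s$ via the same triangle/reverse-triangle estimates, and extract the bound $g(N_{j+1})\le\varepsilon/d$ from indistinguishability and linearity of $S$. Your explicit check that $cf_0\in\cc_\tau\cap\cb_s$ and your step-by-step justification that the adaptive data stream on $f_\pm$ coincides with that on $cf_0$ are slightly more detailed than the paper's presentation, but the argument is the same.
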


\begin{proof} Let $A \in \ca(\cc_{\tau},\cg,S,\Lambda)$ be an arbitrary successful, possibly adaptive, algorithm.  Given an error tolerance, $\varepsilon$, and a positive $s$, let $f_0$ be a function satisfying \eqref{assumpfzero}, and choose 
\begin{subequations}\label{c0c1bumpdef}
\begin{equation} 
\label{c0bumpdef}
c_0 = \frac{s(\tau+\tau_{\min})}{2\tau\tau_{\min}} > 0.
\end{equation} 
Provide the algorithm $A$ with the input function $c_0f_0$, and let $\vL(c_0f_0)$ be the data vector extracted by $A$ to obtain the estimate $A(c_0f_0)$. Let $\$(\vL)$ denote the cost of this algorithm for the function $c_0f_0$, and let $N_j, N_{j+1} \in \cj$ be chosen as before such that $N_{j}+1 \le \$(\vL) \le N_{j+1}$.  Define two fooling functions, $f_{\pm}=c_0f_{0} \pm c_1 f_1$, in terms of $f_1$ satisfying conditions \eqref{assumpfone} with
\begin{equation} 
\label{c1bumpdef}
c_1 = \frac{s (\tau-\tau_{\min})}{2\tau } > 0.
\end{equation}
\end{subequations}
Both fooling functions have $\cf$-semi-norms no greater than $s$, since
\begin{align*}
\Fnorm{f_{\pm}} &\le c_{0} \Fnorm{f_0} + c_1 \Fnorm{f_1} \\
& = \frac{s}{2\tau}\left[\frac{\tau+\tau_{\min}}{\tau_{\min}}\tau_{\min} + (\tau-\tau_{\min}) \right]=s \qquad \text{by \eqref{c0c1bumpdef}}.
\end{align*}
Moreover, these fooling functions must lie inside the cone $\cc_{\tau}$ because
\begin{align*}
\Fnorm{f_{\pm}} - \tau  \Ftnorm{f_{\pm}} 
& \le  s - \tau (c_{0}\Ftnorm{f_0} - c_1 \Ftnorm{f_1}) \\
& \qquad \qquad \qquad \qquad \qquad \qquad \text{by the triangle inequality} \\
& \le  s - \tau c_{0} + \frac{\tau}{\tau_{\min}} c_1 \qquad \text{by \eqref{assumpfone}, \eqref{assumpfzero}}\\
& =  s - \frac{s(\tau+\tau_{\min})}{2\tau_{\min}} + \frac{s (\tau-\tau_{\min})}{2\tau_{\min}}= 0 \qquad \text{by \eqref{c0c1bumpdef}}.
\end{align*}

Following the argument earlier in this section, we note that the data used by algorithm $A$ for both fooling functions is the same, i.e., $\vL(f_{\pm})=\vL(c_0f_0)$, and so $A(f_{\pm})=A(c_0f_0)$.  Consequently, by the same argument used above, 
\[
\varepsilon  \ge  \max(\norm[\cg]{S(f_+)-A(f_+)},\norm[\cg]{S(f_-)-A(f_-)}) \ge c_1 \norm[\cg]{S(f_1)}\ge c_1 g(N_{j+1}).
\]
Here we have used the fact that $S$ is linear.  Since $A$ is successful for these two fooling functions, it follows that $\$(\vL)$, the cost of this arbitrary algorithm, is no greater than $\maxcost(A,\cc_{\tau},\varepsilon,\cb_s)$ and is bounded below by
\[
g^{-1} \left ( \frac{\varepsilon}{c_1} \right ) = g^{-1}\left(\frac{2 \tau \varepsilon}{s(\tau-\tau_{\min})}\right).
\]
This then implies the lower bound on the complexity of the problem.   
\end{proof}

\begin{cor} \label{optimcor}
Suppose that the functions $g$ and $h$ satisfy the hypotheses of Theorem \ref{optimalprop}, and in particular, condition \eqref{ghcond}, which means that Algorithm \ref{nonadaptalgo} has optimal order for solving the problem on $\cf$-balls of input functions.  It then follows that Algorithms \ref{twostagedetalgo} and \ref{multistagealgo} both have optimal order for solving the problem on for input functions lying in the cone $\cc_{\tau}$ in the sense of
\begin{equation*} \label{nearoptdef}
\sup_{\varepsilon,s > 0} \frac{\maxcost(A,\cc_\tau,\varepsilon,\cb_s)} {\max(1,\comp(\varepsilon,\ca(\cc_\tau,\cg,S,\Lambda),\cb_s))} <\infty.
\end{equation*} 
\end{cor}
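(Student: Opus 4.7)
The plan is to stack the upper cost bounds from Theorems \ref{TwoStageDetermThm} and \ref{MultiStageThm} on top of the cone complexity lower bound of Theorem \ref{complowbd}, then use hypothesis \eqref{ghcond} to swap $g^{-1}$ for $h^{-1}$ (exactly as in the proof of Theorem \ref{optimalprop}), and finally use condition \eqref{hinvonetwocond} to absorb the remaining constant-factor mismatch inside the argument of $h^{-1}$.

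First I reduce the two cost upper bounds. For $f\in\cc_\tau\cap\cb_s$ we have $\Fnorm{f}\le s$, so monotonicity of $h^{-1}$ and $h_2^{-1}$ applied to the second branches of the right-hand sides of \eqref{auto2stagedetcost} and \eqref{automultistagedetcost} yields
\begin{equation*}
\maxcost(A,\cc_\tau,\varepsilon,\cb_s)\le
\begin{cases}
n_{\tF}+h^{-1}\!\bigl(\tau_{\min}\varepsilon/[\tau\fC\fc\,s]\bigr) & \text{(Algorithm~\ref{twostagedetalgo}),}\\[1ex]
\max\!\bigl(n_1,\,r\,h_2^{-1}(\tau_{\min}\varepsilon/[\tau s])\bigr) & \text{(Algorithm~\ref{multistagealgo}).}
\end{cases}
\end{equation*}
For the multi-stage case, since $\fC_n$ and $\fc_n$ are non-increasing in $n$ by \eqref{normdeflate}--\eqref{norminflate}, every $n\ge n_1$ satisfies $h_2(n)=\fC_n\fc_n h(n)\le \fC_{n_1}\fc_{n_1}h(n)$, and inverting gives $h_2^{-1}(\delta)\le\max(n_1,h^{-1}(\delta/[\fC_{n_1}\fc_{n_1}]))$. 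Thus in either case the upper bound has the form $C_*+h^{-1}(\alpha\varepsilon/s)$ with constants $C_*$ and $\alpha>0$ depending only on $\tau,\tau_{\min},r,n_{\tF},n_1$, and the inflation/deflation factors at the starting index.

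Next, Theorem \ref{complowbd} combined with the short computation in the proof of Theorem \ref{optimalprop} (pick $C\ge\sup_{n\in\ci}h(n)/g(n)$ finite from \eqref{ghcond} and use \eqref{Inobiggap}) gives
\begin{equation*}
\comp(\varepsilon,\ca(\cc_\tau,\cg,S,\Lambda),\cb_s)\ge g^{-1}\!\left(\frac{2\tau\varepsilon}{s(\tau-\tau_{\min})}\right)\ge \frac{h^{-1}(\beta\varepsilon/s)}{\rho},\qquad \beta=\frac{2\tau C}{\tau-\tau_{\min}}.
\end{equation*}
Forming the ratio therefore reduces the corollary to the estimate $h^{-1}(\alpha\varepsilon/s)\le K\max(1,h^{-1}(\beta\varepsilon/s))$ uniformly in $\varepsilon/s>0$, and to absorbing $C_*$. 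When $\alpha\ge\beta$ we may take $K=1$; otherwise pick an integer $k$ with $2^k\ge\beta/\alpha$ and iterate \eqref{hinvonetwocond} $k$ times to obtain $K=K_0^k$, where $K_0$ is the supremum in \eqref{hinvonetwocond}. The additive constant $C_*$ is then absorbed by $\max(1,h^{-1}(\beta\varepsilon/s)/\rho)$: when the denominator equals $1$, $h^{-1}(\alpha\varepsilon/s)\le K_0^k$ is already bounded; when the denominator exceeds $1$, $C_*$ is dominated by a fixed multiple of it.

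The step I expect to be most delicate is relating $h_2^{-1}$ back to $h^{-1}$ up to constants, since $h_2$ is a product of an $n$-dependent inflation factor with $h$; the argument hinges on the non-increasingness of $\fC_n$ and $\fc_n$ built into \eqref{normdeflate}--\eqref{norminflate}. The rest of the proof is a repackaging of the ball-level argument already used in Theorem \ref{optimalprop}.
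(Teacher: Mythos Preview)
The paper states Corollary~\ref{optimcor} without proof; it is offered as an immediate consequence of Theorems~\ref{TwoStageDetermThm}, \ref{MultiStageThm}, and~\ref{complowbd} together with the ball-case argument already written out in the proof of Theorem~\ref{optimalprop}. Your proposal supplies exactly those omitted details, and the argument is correct: you take the $\cf$-ball branch of the cost upper bounds, insert $\Fnorm{f}\le s$, invoke the cone lower bound of Theorem~\ref{complowbd}, convert $g^{-1}$ to $h^{-1}$ via \eqref{ghcond} and \eqref{Inobiggap} as in the proof of Theorem~\ref{optimalprop}, and then absorb the constant-factor discrepancy in the argument of $h^{-1}$ by iterating \eqref{hinvonetwocond}. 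The one place where you go a bit beyond what the paper makes explicit is the reduction of $h_2^{-1}$ to $h^{-1}$ via the monotonicity of $\fC_n$ and $\fc_n$ stated in Lemma~\ref{Gnormlem}; your inequality $h_2^{-1}(\delta)\le\max\bigl(n_1,\,h^{-1}(\delta/[\fC_{n_1}\fc_{n_1}])\bigr)$ is correct and is the natural way to close that gap.
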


\begin{table}
\centering
\begin{tabular}{>{\centering}m{6cm}>{\centering}m{5cm}}
\toprule
{\bf Minimum} cost of the {\bf best} algorithm that knows $f \in \cb_{\sigma}$ & $\displaystyle \ge g^{-1}\left(\frac{ \varepsilon}{\min(\sigma,\Fnorm{f})}\right)$ \tabularnewline
\midrule
{\bf Minimum} cost of the {\bf best} algorithm that knows that $f \in \cc_{\tau}$ & $\displaystyle \ge g^{-1}\left(\frac{2 \tau \varepsilon}{\Fnorm{f}(\tau-\tau_{\min})}\right)$ \tabularnewline
\midrule
Cost of {\bf non-adaptive} Algorithm \ref{nonadaptalgo} that knows $f \in \cb_{\sigma}$ & $\displaystyle h^{-1}\left(\frac{\varepsilon}{\sigma}\right)$ \tabularnewline
\midrule
{\bf Minimum} cost of {\bf adaptive} Algorithm \ref{multistagealgo} that knows $f \in \cc_{\tau}$ & $\displaystyle \ge \max\left( n_1, h^{-1}\left(\frac{\varepsilon}{\Fnorm{f}}\right)\right)$ \tabularnewline
\midrule
{\bf Maximum} cost of {\bf adaptive} Algorithm \ref{multistagealgo} that knows $f \in \cc_{\tau}$ & $\displaystyle \le \max \left(n_1, rh_2^{-1}\left(\frac{\tau_{\min}\varepsilon}{\tau \Fnorm{f}} \right) \right)$ \tabularnewline
\bottomrule
\end{tabular}
\caption{Costs of various algorithms, $A$, guaranteed to satisfy the tolerance, i.e.,  $\norm[\cg]{S(f) - A(f)} \le \varepsilon$. In all cases $\Fnorm{f}$ is unknown to the algorithm. \label{costcomparefig}}
\end{table}

Table \ref{costcomparefig} summarizes the lower and upper bounds on the computational cost of computing $S(f)$ to within an absolute error of $\varepsilon$.  The results summarized here are based on Theorems \ref{NonAdaptDetermThm}, \ref{MultiStageThm}, \ref{complowbdball}, and \ref{complowbd}.  Under condition \eqref{ghcond} all the algorithms mentioned in Table \ref{costcomparefig} have roughly the same computational cost.  

However, in the limit of vanishing $\varepsilon$ and $\Fnorm{f}$ with $\varepsilon/\Fnorm{f}$ held constant, the non-adaptive Algorithm \ref{nonadaptalgo} has unbounded cost, while the adaptive Algorithm \ref{multistagealgo} has bounded cost. The disadvantage of the non-adaptive algorithm is also seen in the two optimality results.  The supremum in Corollary \ref{optimcor} is taken over $s$ as well as $\varepsilon$, whereas the supremum in Theorem \ref{optimalprop} can only be taken over $\varepsilon$.

The next two sections illustrate the results of Section \ref{genthmsec} and \ref{LowBoundSec} for the problems of integration and approximation. Algorithm \ref{multistagealgo} is given explicitly for these two cases along with the guarantees provided by Theorem \ref{MultiStageThm}, the lower bound on complexity provided by Theorem \ref{complowbd}, and the optimality given by Corollary \ref{optimcor}.

\section{Approximation of One-Dimensional Integrals} \label{integsec}

The algorithms used in this section on integration and the next section on function recovery are all based on linear splines on $[0,1]$.  The node set and the linear spline algorithm using $n$ function values are defined for $n \in \mathcal{I}:=\{2,3,\ldots\}$ as follows:
\begin{subequations} \label{linearspline}
\begin{equation}
x_i=\frac{i-1}{n-1}, \qquad i=1, \ldots, n,
\end{equation}
\begin{multline}
A_{n}(f)(x):=(n-1) \left[ f(x_{i})(x_{i+1}-x) +f(x_{i+1})(x-x_i) \right] \\ \text{for }x_i \leq x \leq x_{i+1}.
\end{multline}
\end{subequations}
The cost of each function value is one and so the cost of  $A_n$ is $n$. The algorithm $A_n$ is imbedded in the algorithm $A_{2n-1}$, which uses $2n-2$ subintervals.  Thus, $r=2$ is the cost multiple as described in Section \ref{adapintrosec}.

The problem to be solved is univariate integration on the unit interval, $S(f):=\INT(f):=\int_{0}^{1}f(x) \, \dif x \in \cg := \reals$.  The fixed cost building blocks to construct the adaptive integration algorithm are the composite trapezoidal rules based on $n-1$ trapezoids:
\begin{equation*}
    T_{n}(f) := \int_0^1 A_n(f) \, \dif x
    =\frac{1}{2n-2}[f(x_1)+2f(x_2)+\cdots+2f(x_{n-1})+f(x_n)].
\end{equation*}

The space of input functions is $\cf:=\mathcal{V}^{1}$, the space of functions whose first derivatives have finite variation.  The general definitions of some relevant norms and spaces are as follows:
\begin{subequations} \label{defSobolev}
\begin{gather}
\Var(f) := \sup_{\substack{n \in \naturals\\ 0 = x_0 < x_1 < \cdots < x_{n} =1}} \sum_{i=1}^n \abs{f(x_i)-f(x_{i-1})}, \\
\norm[p]{f}:= \begin{cases} \displaystyle \left[\int_0^1 \abs{f(x)}^p \, \dif x \right]^{1/p}, & 1 \le p < \infty,\\[1ex]
\displaystyle  \sup_{0 \le x \le 1} \abs{f(x)}, & p=\infty,
\end{cases}
\\
\cv^{k}: =\cv^{k}[0,1]=\{f\in C[0,1]: \Var(f^{(k)}) < \infty \}, \\
\mathcal{W}^{k,p}=\mathcal{W}^{k,p}[0,1]=\{f\in C[0,1]: \|f^{(k)}\|_{p}<\infty\}.
\end{gather}
\end{subequations}
The stronger semi-norm is $\Fnorm{f}:=\Var(f')$, while the weaker semi-norm is
\[
\Ftnorm{f}:=\norm[1]{f'-A_2(f)'}=\norm[1]{f'-f(1)+f(0)}=\Var(f-A_2(f)),
\]
where $A_2(f): x \mapsto f(0)(1-x)+f(1)x$ is the linear interpolant of $f$ using the two endpoints of the integration interval. The reason for defining $\Ftnorm{f}$ this way is that $\Ftnorm{f}$ vanishes if $f$ is a linear function, and linear functions are integrated exactly by the trapezoidal rule.  The cone of integrands is defined as
\begin{equation}\label{coneinteg}
\cc_{\tau}:=\{f\in \cv^{1}:\Var(f')\leq\tau\|f'-f(1)+f(0)\|_1\}.
\end{equation}

The algorithm for approximating $\norm[1]{f'-f(1)+f(0)}$ is the $\tcf$-semi-norm of the linear spline, $A_n(f)$:
\begin{align}
\nonumber
\tF_n(f)&:=\Ftnorm{A_n(f)}=\bignorm[1]{A_n(f)'-A_2(f)'} \\
\label{1direst}
&=\sum_{i=1}^{n-1}\left|f(x_{i+1})-f(x_{i}) - \frac{f(1)-f(0)}{n-1}\right|.
\end{align}
The variation of the first derivative of the linear spline of $f$, i.e.,
\begin{equation} \label{Fnormalg}
F_n(f) :=\Var(A_n(f)') = (n-1)\sum_{i=1}^{n-2} \bigabs{f(x_i) - 2 f(x_{i+1})+f(x_{i+2})},
\end{equation}
provides a lower bound on $\Var(f')$ for $n \ge 3$, and can be used in the necessary condition that $f$ lies in $\cc_\tau$ as described in Remark \ref{neccondrem}.
The mean value theorem implies that
\begin{align*}
F_n(f) &= (n-1)\sum_{i=1}^{n-1} \bigabs{[f(x_{i+2}) - f(x_{i+1})] - [f(x_{i+1}) - f(x_{i})]} \\
&= \sum_{i=1}^{n-1} \abs{f'(\xi_{i+1}) - f'(\xi_{i})} \le \Var(f'),
\end{align*}
where $\xi_i$ is some point in $[x_i,x_{i+1}]$.

\subsection{Adaptive Algorithm and Upper Bound on the Cost}

Constructing the adaptive algorithm for integration requires an upper bound on the error of $T_n$ and a two-sided bound on the error of $\tF_n$.  Note that $\tF_{n}(f)$ never overestimates $\Ftnorm{f}$ because
\begin{align*}
\Ftnorm{f} & = \bignorm[1]{f'-A_2(f)'}
= \sum_{i=1}^{n-1} \int_{x_i}^{x_{i+1}} \abs{f'(x) - A_2(f)'(x)} \, \dif x \\
& \ge \sum_{i=1}^{n-1} \abs{\int_{x_i}^{x_{i+1}} [f'(x) - A_2(f)'(x)] \, \dif x}=\norm[1]{A_n(f)'-A_2(f)'} = \tF_n(f).
\end{align*}
Thus, $h_{-}(n):=0$ and $\fc_n=\tfc_n=1$.

To find an upper bound on $\Ftnorm{f}-\tF_{n}(f)$, note that
\begin{equation*}
\Ftnorm{f} - \tF_{n}(f) = \Ftnorm{f} - \bigabs{A_n(f)}_{\tcf} \le \bigabs{f-A_n(f)}_{\tcf} = \bignorm[1]{f' -A_n(f)'},
\end{equation*}
since $(f-A_n(f))(x)$ vanishes for $x=0,1$.  Moreover,
\begin{equation} \label{onenormfp}
\bignorm[1]{f' -A_n(f)'} = \sum_{i=1}^{n-1} \int_{x_i}^{x_{i+1}} \abs{f'(x) -(n-1)[f(x_{i+1})-f(x_i)]} \, \dif x.
\end{equation}
Now we bound each integral in the summation.  For $i=1, \ldots, n-1$, let $\eta_i(x) = f'(x) -(n-1)[f(x_{i+1})-f(x_i)]$, and let $p_i$ denote the probability that $\eta_i(x)$ is non-negative:
\[
p_i = (n-1)\int_{x_i}^{x_{i+1}} \bbone_{[0,\infty)} (\eta_i(x)) \, \dif x,
\]
and so $1-p_i$ is the probability that $\eta_i(x)$ is negative.  Since $\int_{x_i}^{x_{i+1}} \eta_i(x) \, \dif x =0$, we know that $\eta_i$ must take on both non-positive and non-negative values.  Invoking the mean value theorem, it follows that
\begin{multline*}
\frac{p_i}{n-1} \sup_{x_i \le x \le x_{i+1}} \eta_i(x) \ge \int_{x_i}^{x_{i+1}} \max(\eta_i(x),0) \, \dif x \\
= \int_{x_i}^{x_{i+1}} \max(-\eta_i(x),0) \, \dif x \le \frac{-(1-p_i)}{n-1} \inf_{x_i \le x \le x_{i+1}} \eta_i(x) .
\end{multline*}
These bounds allow us to derive bounds on the integrals in \eqref{onenormfp}:
\begin{align*}
\MoveEqLeft{\int_{x_i}^{x_{i+1}} \abs{\eta_i(x)} \, \dif x} \\
 &= \int_{x_i}^{x_{i+1}} \max(\eta_i(x),0) \, \dif x + \int_{x_i}^{x_{i+1}} \max(-\eta_i(x),0) \, \dif x\\
&=2(1-p_i) \int_{x_i}^{x_{i+1}} \max(\eta_i(x),0) \, \dif x + 2p_i\int_{x_i}^{x_{i+1}} \max(-\eta_i(x),0) \, \dif x\\
&\le \frac{2p_i(1-p_i)}{n-1} \left[ \sup_{x_i \le x \le x_{i+1}} \eta_i(x) - \inf_{x_i \le x \le x_{i+1}} \eta_i(x) \right]\\
&\le\frac{1}{2(n-1)} \left[ \sup_{x_i \le x \le x_{i+1}} f'(x) - \inf_{x_i \le x \le x_{i+1}} f'(x) \right],
\end{align*}
since $p_i(1-p_i)\le 1/4$.

Plugging this bound into \eqref{onenormfp} yields
\begin{align*}
\bignorm[1]{f'-f(1)+f(0)} - \tF_n(f) &= \Ftnorm{f} - \tF_{n}(f)\\
 & \le \bignorm[1]{f' -A_n(f)'}\\
&\le \frac{1}{2n-2} \sum_{i=1}^{n-1} \left[ \sup_{x_i \le x \le x_{i+1}} f'(x) - \inf_{x_i \le x \le x_{i+1}} f'(x) \right] \\
& \le \frac{\Var(f')}{2n-2} = \frac{\Fnorm{f}}{2n-2},
\end{align*}
and so
\begin{equation*}\label{factor}
h_{+}(n):= \frac{1}{2n-2}, \qquad \mathfrak{C}_n =\frac{1}{1 - \tau/(2n-2)} \qquad \text{for } n>1+\tau/2.
\end{equation*}
Since $\tF_2(f)=0$ by definition, the above inequality for $\Ftnorm{f} - \tF_{2}(f)$ implies that
\begin{equation*} \label{taumininteg}
2\bignorm[1]{f'-f(1)+f(0)} = 2 \Ftnorm{f} \le \Fnorm{f} = \Var(f'), \qquad \tau_{\min}=2.
\end{equation*}

The error of the trapezoidal rule in terms of the variation of the first derivative of the integrand is given in \cite[(7.15)]{BraPet11a}:
\begin{gather*}
\abs{\int_0^1 f(x) \, dx - T_n(f)} \le h(n) \Var(f') \\
h(n):= \frac{1}{8(n-1)^2}, \qquad h^{-1}(\varepsilon) = \left \lceil \sqrt{\frac{1}{8\varepsilon}} \right \rceil +1.
\end{gather*}

Given the above definitions of $h, \fC_n, \fc_n$, and $\tfc_n$, it is now possible to also specify
\begin{subequations} \label{simplifycond}
\begin{gather}
h_1(n) = h_2(n) = \fC_n h(n) = \frac{1}{4(n-1)(2n-2-\tau)}, \\
h_1^{-1}(\varepsilon) = h_2^{-1}(\varepsilon) = 1+ \left \lceil \sqrt{\frac{\tau}{8 \varepsilon} + \frac{\tau^2}{16}} +\frac{\tau}{4} \right \rceil \le 2 + \frac{\tau}{2} + \sqrt{\frac{\tau}{8\varepsilon}}.
\end{gather}
Moreover, the left side of \eqref{multistageconv}, the stopping criterion inequality in the multi-stage algorithm, becomes
\begin{equation}
\tau h(n_i)\fC_{n_i} \tF_{n_i}(f) = \frac{\tau  \tF_{n_i}(f) } {4(n_i-1)(2n_i-2 -\tau)}.
\end{equation}
\end{subequations}
With these preliminaries, Algorithm \ref{multistagealgo} and Theorem \ref{MultiStageThm} may be applied directly to  yield the following adaptive integration algorithm and its guarantee.

\begin{algo}[Adaptive Univariate Integration] \label{multistageintegalgo}
Let the sequence of algorithms $\{A_n\}_{n\in \mathcal{I}}$, $\{\tF_n\}_{n\in \mathcal{I}}$, and $\{F_n\}_{n\in \mathcal{I}}$ be as described above.
Let $\tau\ge2$ be the cone constant. Set $i=1$. Let $n_1=\lceil(\tau+1)/2\rceil+1$. For any error tolerance $\varepsilon$ and input function $f$, do the following:
\begin{description}
\item[Stage 1.\ Estimate {$\norm[1]{f'-f(1)+f(0)}$} and bound {$\Var(f')$}.] Compute $\tF_{n_i}(f)$ in \eqref{1direst} and $F_{n_i}(f)$ in \eqref{Fnormalg}.

\item[Stage 2. Check the necessary condition for $f \in \cc_{\tau}$.] Compute
    \begin{align*}
     \tau_{\min,n_i} =  \frac{F_{n_i}(f)}{\tF_{n_i}(f)+F_{n_i}(f)/(2n_i-2)}.
    \end{align*}
If $\tau \ge \tau_{\min,n_i}$, then go to stage 3.  Otherwise, set $\tau = 2\tau_{\min,n_i}$.  If $n_i \ge (\tau+1)/2$, then go to stage 3.  Otherwise, choose
$$
n_{i+1}=1+ (n_i-1)\left\lceil\frac{\tau+1}{2n_i-2}\right\rceil.
$$
Go to Stage 1.

\item[Stage 3. Check for convergence.] Check whether $n_i$ is large enough to satisfy the error tolerance, i.e.
    \begin{equation*}
     \tF_{n_i}(f) \le \frac{4\varepsilon(n_i-1)(2n_i-2 - \tau)}{\tau}.
    \end{equation*}
If this is true, then return $T_{n_i}(f)$ and terminate the algorithm.   If this is not true, choose
$$
n_{i+1}=1+ (n_i-1)\max\left\{2,\left\lceil\frac{1}{(n_i-1)}\sqrt{\frac{\tau \tF_{n_i}(f)}{8\varepsilon}}\right\rceil\right\}.
$$
Go to Stage 1.
\end{description}
\end{algo}

\begin{theorem} \label{multistageintegthm}
Let $\sigma >0$ be some fixed parameter, and let $\cb_{\sigma}=\{f \in  \mathcal{V}^{1} : \Var(f')\leq \sigma\}$. Let $T \in \ca(\cb_{\sigma}, \reals, \INT, \Lambda^{\std})$ be the non-adaptive trapezoidal rule defined by Algorithm \ref{nonadaptalgo}, and let $\varepsilon>0$ be the error tolerance. Then this algorithm succeeds for $f \in \cb_{\sigma}$, i.e., $\abs{\INT(f) - T(f,\varepsilon)} \le \varepsilon$, and the cost of this algorithm is $\left \lceil \sqrt{\sigma/(8\varepsilon)}\right \rceil + 1$, regardless of the size of $\Var(f')$.

Now let $T \in \ca(\cc_{\tau}, \reals, \INT, \Lambda^{\std})$ be the adaptive trapezoidal rule defined by Algorithm \ref{multistageintegalgo}, and let $\tau$, $n_1$, and $\varepsilon$ be as described there. Let $\cc_\tau$ be the cone of functions defined in \eqref{coneinteg}.  Then it follows that Algorithm \ref{multistageintegalgo} is successful for all functions in $\cc_{\tau}$,  i.e.,  $\abs{\INT(f) - T(f,\varepsilon)} \le \varepsilon$.  Moreover, the cost of this algorithm is bounded below and above as follows:
\begin{multline}
\max \left(\left \lceil\frac{\tau+1}{2} \right \rceil, \left \lceil \sqrt{\frac{ \Var(f')}{8\varepsilon}} \right \rceil \right) +1 \\
\le \max \left(\left \lceil\frac{\tau+1}{2} \right \rceil, \left \lceil \sqrt{\frac{\tau \norm[1]{f'-f(1)+f(0)}}{8\varepsilon}} \right \rceil \right) +1 \\
\le
\cost(T,f;\varepsilon,N_{\max}) \\
\le \sqrt{\frac{\tau \norm[1]{f'-f(1)+f(0)}}{2\varepsilon}} + \tau + 4
\le \sqrt{\frac{\tau \Var(f') }{4\varepsilon}} + \tau + 4.
\end{multline}
The algorithm is computationally stable, meaning that the minimum and maximum costs for all integrands, $f$, with fixed $\norm[1]{f'-f(1)+f(0)}$ or $\Var(f')$ are an $\varepsilon$-independent constant of each other.
\end{theorem}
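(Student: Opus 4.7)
The plan is to split the theorem into the non-adaptive and adaptive claims and hand each off to a general result already established earlier in the paper.

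For the non-adaptive claim, the error bound $\abs{\INT(f) - T_n(f)} \le h(n)\Var(f')$ with $h(n) = 1/[8(n-1)^2]$ is exactly hypothesis \eqref{algoerr} of Theorem \ref{NonAdaptDetermThm}, taking $\Fnorm{\cdot} = \Var(\cdot')$. A direct application of that theorem gives $\abs{\INT(f) - T(f,\varepsilon)} \le \varepsilon$ for $f \in \cb_\sigma$, and the cost expression \eqref{nonadaptalgocost} combined with the explicit inverse $h^{-1}(\eta) = \lceil \sqrt{1/(8\eta)}\rceil + 1$ yields the stated $\lceil\sqrt{\sigma/(8\varepsilon)}\rceil + 1$ regardless of the actual size of $\Var(f')$.

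For the adaptive claim, the strategy is to recognize Algorithm \ref{multistageintegalgo} as an instance of Algorithm \ref{multistagealgo} and then invoke Theorem \ref{MultiStageThm}. All the required abstract hypotheses have already been verified in Section \ref{integsec}: the solver sequence $\{T_n\}_{n \in \ci}$ has error function $h(n) = 1/[8(n-1)^2]$; the $\tcf$-semi-norm estimator $\tF_n$ from \eqref{1direst} satisfies $h_-(n) = 0$ (so $\fc_n = \tfc_n = 1$) and $h_+(n) = 1/(2n-2)$, whence $\fC_n = 1/[1 - \tau/(2n-2)]$ is valid precisely when $n > 1 + \tau/2$; the $n=2$ case of the estimator's upper bound delivers $\tau_{\min} = 2$; and the spline nesting $A_n \subset A_{2n-1}$ gives cost multiple $r = 2$ and $\rho = 2$ in \eqref{Inobiggap}. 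Stage 3 of the integration algorithm is exactly the concatenation of Stages 2 and 3 of Algorithm \ref{multistagealgo} once one plugs in the closed forms assembled in \eqref{simplifycond}.

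The only piece not directly covered by the abstract algorithm is Stage 2 of Algorithm \ref{multistageintegalgo}, which is the optional safeguard from Remark \ref{neccondrem} using the computable lower bound $F_{n_i}(f) \le \Var(f')$ from \eqref{Fnormalg}. Whenever $f \in \cc_\tau$, the necessary condition \eqref{neccondFn} guarantees $\tau_{\min,n_i} \le \tau$ automatically, so the enlargement branch of Stage 2 never fires and the algorithm follows the same trajectory as Algorithm \ref{multistagealgo}. Theorem \ref{MultiStageThm} then immediately supplies success, the abstract two-sided cost bound \eqref{automultistagedetcost}, and computational stability on both $\tcf$- and $\cf$-balls. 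The explicit bounds in the statement follow by inverting $h(n) = 1/[8(n-1)^2]$ and $h_1(n) = h_2(n) = 1/[4(n-1)(2n-2-\tau)]$ via the quadratic formula, applying $\sqrt{a+b} \le \sqrt{a} + \sqrt{b}$, and substituting $\tau_{\min} = 2$ and $r = 2$ into the right-hand sides of \eqref{automultistagedetcost}.

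The main obstacle I anticipate is confirming rigorously that the Stage 2 safeguard does not interfere with the cost analysis on the cone. One must exclude any scenario in which $\tau_{\min,n_i}$ transiently exceeds $\tau$ at some intermediate iteration and triggers a re-enlargement that disrupts the monotone-in-$i$ argument underlying Theorem \ref{MultiStageThm}. The resolution is that \eqref{neccondFn} holds at every $n_i$ whenever $f \in \cc_\tau$, so the ``else'' branch of Stage 2 is simply unreachable on the cone; the remaining concern is bookkeeping of the additive constant $+4$ in the upper bound, which tracks two ceiling rounds combined with the factor $r = 2$.
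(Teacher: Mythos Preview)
Your proposal is correct and mirrors the paper's own treatment: the paper gives no separate proof of Theorem \ref{multistageintegthm} but presents it as the direct specialization of Theorems \ref{NonAdaptDetermThm} and \ref{MultiStageThm} obtained by plugging in the explicit $h$, $h_\pm$, $\fC_n$, $\tau_{\min}=2$, $r=2$, and the inverse formulas collected in \eqref{simplifycond}. Your observation that Stage~2 never fires for $f\in\cc_\tau$ (because \eqref{neccondFn} holds at every $n_i$) is the one point the paper leaves implicit, and you handle it correctly.
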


\subsection{Lower Bound on the Computational Cost}
Next, we derive a lower bound on the cost of approximating functions in the ball $\cb_{\sigma}$ and in the cone $\cc_{\tau}$ by constructing fooling functions. Following the arguments of Section \ref{LowBoundSec}, we choose  the triangle shaped function $f_0: x \mapsto 1/2-\abs{1/2-x}$. Then
\begin{gather*}
\Ftnorm{f_0}=\norm[1]{f'_0-f_0(1)+f_0(0)}=\int_0^1 \abs{\sign(1/2-x)} \, \dif x = 1, \\ \Fnorm{f_0}=\Var(f'_0)=2= \tau_{\min}.
\end{gather*}
For any $n \in \cj:=\natzero$, suppose that the one has the data $L_i(f)=f(\xi_i)$, $i=1, \ldots, n$ for arbitrary $\xi_i$, where $0=\xi_0 \le \xi_1 < \cdots < \xi_n \le \xi_{n+1} = 1$.  There must be some $j=0, \ldots, n$ such that $\xi_{j+1} - \xi_j \ge 1/(n+1)$.  The function $f_{1}$ is defined as a triangle function on the interval $[\xi_j, \xi_{j+1}]$:
$$
f_{1}(x):=\begin{cases} \displaystyle
\frac{\xi_{j+1}-\xi_{j}-\abs{\xi_{j+1}+\xi_{j}-2x}}{8} & \xi_{j} \le x \leq \xi_{j+1},\\
0 & \text{otherwise}.
\end{cases}
$$
This is a piecewise linear function whose derivative changes from $0$ to $1/4$ to $-1/4$ to $0$ provided $0 < \xi_j < \xi_{j+1} < 1$, and so $\Fnorm{f_1}=\Var(f'_1)\le 1$. Moreover,
\begin{gather*}
\INT(f)=\int_0^1 f_1(x) \, \dif x = \frac{(\xi_{j+1} - \xi_j)^2}{16} \ge \frac{1}{16(n+1)^2} =: g(n),\\
g^{-1}(\varepsilon)=\left \lceil \sqrt{\frac{1}{16 \varepsilon}} \right \rceil - 1.
\end{gather*}
Using these choices of $f_0$ and $f_1$, along with the corresponding $g$ above, one may invoke Theorems \ref{complowbdball}--\ref{complowbd}, and Corollary \ref{optimcor} to obtain the following theorem.

\begin{theorem} \label{complowbdinteg} For $\sigma>0$ let $\cb_{\sigma}=\{f \in \cv^{1} : \Var(f') \le \sigma\}$.  The complexity of integration on this ball is bounded below as
\begin{equation*}
\comp(\varepsilon,\ca(\cb_{\sigma},\reals,\INT,\Lambda^{\std}),\cb_{s}) \ge \left \lceil \sqrt{\frac{\min(s,\sigma)}{16 \varepsilon}} \right \rceil -1 .
\end{equation*}
Algorithm \ref{nonadaptalgo} using the trapezoidal rule has optimal order in the sense of Theorem \ref{optimalprop}.

For $\tau>2$, the complexity of the integration problem over the cone of functions $\cc_{\tau}$ defined in \eqref{coneinteg} is bounded below as
\begin{equation*}
\comp(\varepsilon,\ca(\cc_{\tau},\reals,\INT,\Lambda^{\std}),\cb_{s}) \ge \left \lceil \sqrt{\frac{(\tau-2)s}{32 \tau \varepsilon}} \right \rceil -1 .
\end{equation*}
The adaptive trapezoidal Algorithm \ref{multistageintegalgo} has optimal order for integration of functions in $\cc_{\tau}$ in the sense of Corollary \ref{optimcor}.
\end{theorem}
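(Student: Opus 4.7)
The plan is to apply the general lower-bound machinery of Section \ref{LowBoundSec} with the explicit fooling functions $f_0$ and $f_1$, together with $g(n) = 1/[16(n+1)^2]$, that have already been constructed in the text leading up to the theorem. The proof proper then reduces to verifying hypotheses and inverting $g$.

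First, for the ball bound, I would verify condition \eqref{assumpfone}: given any data vector of cost at most $n$, the $n$ evaluation points in $[0,1]$ leave a gap of length at least $1/(n+1)$ somewhere, so the triangle bump $f_1$ supported in that widest gap is invisible to the data, has $\Var(f_1') \le 1$ (the derivative jumps sum to $\tfrac14 + \tfrac12 + \tfrac14 = 1$), and integrates to at least $1/[16(n+1)^2]$. Since $\tau_{\min}\Ftnorm{f_1} \le \Fnorm{f_1} \le 1$ (using $\tau_{\min}=2$ from the preamble), Theorem \ref{complowbdball} applies. A direct inversion --- the condition $g(n) > \eta$ is equivalent to $n+1 < 1/\sqrt{16\eta}$, so $g^{-1}(\eta) = \lceil 1/\sqrt{16\eta}\rceil - 1$ --- and substituting $\eta = \varepsilon/\min(s,\sigma)$ yields the claimed ball bound.

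Second, for the cone bound I would verify condition \eqref{assumpfzero} for $f_0(x) = 1/2 - |1/2 - x|$: indeed $\Ftnorm{f_0} = 1$ and $\Fnorm{f_0} = 2 = \tau_{\min}$, as computed in the preamble. Since the integration operator $\INT$ is linear and the hypothesis $\tau > 2$ holds, Theorem \ref{complowbd} applies with the same $g$, giving $\comp \ge g^{-1}(2\tau\varepsilon/[s(\tau-2)])$; inverting $g$ produces the second stated bound.

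Finally, for the optimality claims I would invoke Theorem \ref{optimalprop} and Corollary \ref{optimcor}. With $h(n) = 1/[8(n-1)^2]$ and $g(n) = 1/[16(n+1)^2]$, the ratio $h(n)/g(n) = 2[(n+1)/(n-1)]^2$ is bounded on $\ci = \{2,3,\ldots\}$ (attaining its maximum $18$ at $n=2$ and decreasing to $2$), so \eqref{ghcond} holds; the auxiliary conditions \eqref{Inobiggap} and \eqref{hinvonetwocond} are immediate since $h^{-1}(\varepsilon) \asymp \varepsilon^{-1/2}$. The main obstacle is purely bookkeeping: care with the ceilings when inverting $g$, and confirming that the adversarial pigeonhole construction of $f_1$ really does defeat any adaptively chosen data vector of cost $n$ --- this is legitimate because in the proofs of Theorems \ref{complowbdball} and \ref{complowbd} the bump is constructed \emph{after} observing the data that $A$ samples on the zero (respectively $c_0 f_0$) input, so the adversary has full knowledge of the query points before choosing the gap.
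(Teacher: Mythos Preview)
Your proposal is correct and follows essentially the same approach as the paper: the paper constructs the very $f_0$, $f_1$, and $g$ you cite, then simply invokes Theorems \ref{complowbdball}--\ref{complowbd} and Corollary \ref{optimcor} without further comment. Your write-up merely makes explicit a few steps the paper leaves implicit (the inversion of $g$, the boundedness of $h(n)/g(n)$, and the adversarial timing of the bump placement).
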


\subsection{Numerical Example} \label{integnumexamplesec}

Consider the family of bump test functions defined by
\begin{multline}\label{testfun}
f(x)= \\
\begin{cases}
\displaystyle  b[4a^2 + (x-z)^2-(x-z-a)|x-z-a|\\
\qquad \qquad -(x-z+a)|x-z+a|], & z-2a\leq x\leq z+2a,\\[2ex]
\displaystyle  0, & \text{otherwise}.
\end{cases}
\end{multline}
with  $\log_{10}(a) \sim \cu[-4,-1]$, $z \sim \cu[2a,1-2a]$, and $b=1/(4a^3)$ chosen to make $\int_0^1 f(x) \, \dif x = 1$.  It follows that $\norm[1]{f'-f(1)+f(0)}=1/a$ and $\Var(f')=2/a^2$.  The probability that $f \in \cc_{\tau}$ is $\min\left(1,\max(0,\left(\log_{10}(\tau/2)-1\right)/3)\right).$

As an experiment, we chose $10000$ random test functions and applied Algorithm \ref{multistageintegalgo} with an error tolerance of  $\varepsilon = 10^{-8}$ and initial $\tau$ values of $10, 100, 1000$.  The algorithm is considered successful for a particular $f$ if the exact and approximate integrals agree to within $\varepsilon$. The success and failure rates are given in Table \ref{integresultstable}. Our algorithm imposes a cost budget of $N_{\max}=10^7$.  If the proposed $n_{i+1}$ in Stages 2 or 3 exceeds $N_{\max}$, then our algorithm returns a warning and falls back to the largest possible $n_{i+1}$ not exceeding $N_{\max}$ for which $n_{i+1}-1$ is a multiple of $n_i-1$.  The probability that $f$ initially lies in $\cc_{\tau}$ is the smaller number in the third column of Table \ref{integresultstable}, while the larger number is the empirical probability that $f$ eventually lies in $\cc_{\tau}$ after possible increases in $\tau$ made by Stage 2 of Algorithm \ref{multistageintegalgo}.  For this experiment Algorithm \ref{multistageintegalgo} was successful for all $f$ that finally lie inside $\cc_{\tau}$, for which there was no warning.  It was also successful for a small percentage of functions lying outside the cone.

\begin{table}[h]
\centering
\begin{tabular}{cccccc}
&&&Success & Success & Failure \\
& $\tau$ &  $\Prob(f \in \cc_{\tau}) $ & No Warning & Warning & No Warning \\
\toprule
&$10$ & $0\% \rightarrow  25\% $ & $25\%$ & $<1\%$ & $75\%$  \\
Algorithm \ref{multistageintegalgo}
 &$100$ & $23 \% \rightarrow 58\% $ & $56\%$ & $2\%$ & $42\%$ \\
&$1000$ & $57\% \rightarrow 88\% $& $68\%$ & $20\%$ &$12\%$ \\
\midrule
{\tt quad} & & & 8\% & & $92\%$\\
{\tt integral} & & & 19\% & & $81\%$\\
{\tt chebfun} & & &29\% & & $71\%$\\
\end{tabular}
\caption{The probability of the test function lying in the cone for the original and eventual values of $\tau$ and the empirical success rate of Algorithm \ref{multistageintegalgo} plus the success rates of other common quadrature algorithms. \label{integresultstable}}
\end{table}

Some commonly available numerical algorithms in MATLAB are {\tt quad} and {\tt integral} \cite{MAT8.1} and the MATLAB Chebfun toolbox \cite{TrefEtal12}. We applied these three routines to the random family of test functions.  Their success and failure rates are also recorded in Table \ref{integresultstable}.  They do not give warnings of possible failure.

\section{$\cl_{\infty}$ Approximation of Univariate Functions} \label{approxsec}

Now we consider the problem of $\cl_{\infty}$ recovery of functions, i.e.,
\[
S(f):=\APP(f):=f, \qquad \cg:=\cl_{\infty}, \qquad \norm[\cg]{S(f)-A(f)}=\norm[\infty]{f-A(f)}.
\]
The space of functions to be recovered is the Sobolev space $\cf := \mathcal{W}^{2,\infty}$, as defined in \eqref{defSobolev}.  Our adaptive algorithm is defined on the following cone of functions
\begin{subequations} \label{coneappxdef}
\begin{gather}
\Ftnorm{f}:=\norm[\infty]{f'-f(1)+f(0)}, \qquad \Fnorm{f}:=\norm[\infty]{f''}, \\
\cc_{\tau}:=\{f \in  \mathcal{W}^{2,\infty} : \norm[\infty]{f''}\leq \tau\norm[\infty]{f'-f(1)+f(0)}\}.
\end{gather}
\end{subequations}

The basic fixed-cost algorithm used to approximate functions is the linear spline algorithm given in \eqref{linearspline}.
The cost of $A_n$ is $n$, and the cost multiple is $r=2$.
Using this same data one may approximate the $\cl_\infty$ norm of $f'-f(1)+f(0)$ by the algorithm
\begin{multline}\label{estfirstderiv}
\tF_n(f) := \norm[\infty]{A_n(f)' - A_2(f)'} \\
= \sup_{i=1, \ldots, n-1} \bigabs{(n-1) [f(x_{i+1})-f(x_i)]-f(1)+f(0)}.
\end{multline}
Moreover, a lower bound on $\norm[\infty]{f''}$ can be derived similarly to the previous section using a centered difference.  Specifically, for $n \ge 3$,
\begin{equation} \label{Fnormappxalg}
F_n(f) := (n-1)^2\sup_{i=1, \ldots, n-2} \abs{f(x_i) - 2 f(x_{i+1})+f(x_{i+2})}.
\end{equation}
It follows using the H\"older's inequality that
\begin{align*}
F_n(f) &= (n-1)^2\sup_{i=1, \ldots, n-2} \biggabs{\int_{x_i}^{x_{i+2}} \left[\frac{1}{n-1} - \abs{x-x_{i+1}}\right] f''(x) \, \dif x} \\
&\le (n-1)^2\sup_{i=1, \ldots, n-2} \norm[\infty]{f''} \int_{x_i}^{x_{i+2}} \abs{\frac{1}{n-1} - \abs{x-x_{i+1}}} \, \dif x = \norm[\infty]{f''}.
\end{align*}

\subsection{Adaptive Algorithm and Upper Bound on the Cost}

Given the algorithms $\tF_n$ and $A_n$, we now turn to deriving the worst case error bounds, $h_{\pm}$ defined in \eqref{Gerrbds} and $h$ defined in \eqref{algoerr} and satisfying \eqref{algseqerrcond} for $\ci:=\{2, 3, \ldots\}$.
Note that $\tF_{n}(f)$ never overestimates $\Ftnorm{f}$ because
\begin{align*}
\Ftnorm{f} &  = \bignorm[\infty]{f'-A_2(f)'}
= \sup_{\substack{x_i \le x \le x_{i+1}\\ i=1, \ldots, n-1 }} \abs{f'(x) - A_2(f)'(x)} \\
&\ge \sup_{i=1, \ldots, n-1} (n-1)\int_{x_i}^{x_{i+1}} \abs{f'(x) - f(1)+f(0)} \, \dif x \\
&\ge \sup_{i=1, \ldots, n-1} (n-1)\abs{\int_{x_i}^{x_{i+1}} [f'(x) - f(1)+f(0)] \, \dif x }\\
&= \sup_{i=1, \ldots, n-1} (n-1)\abs{f(x_{i+1})-f(x_i) - \frac{f(1)-f(0)}{n-1}} = \tF_n(f).
\end{align*}
Thus, $h_{-}(n):=0$ and $\fc_n=\tfc_n=1$.

The difference between $f$ and its linear spline can be bounded in terms of an integral involving the second derivative using integration by parts.  For $x \in [x_i,x_{i+1}]$ it follows that
\begin{align}
\nonumber
f(x)-A_n (f)(x)
&= f(x) - (n-1) \left[f(x_{i})(x_{i+1}-x) +f(x_{i+1})(x-x_i) \right]\\
& = (n-1) \int_{x_i}^{x_{i+1}} v_i(t,x) f''(t) \, \dif t, \label{fintermsfdoubprime}\\
f'(x)-A_n(f)'(x) & = (n-1) \int_{x_i}^{x_{i+1}} \frac{\partial v_i}{\partial x}(t,x) f''(t) \, \dif t, \label{fprimeintermsfdoubprime}
\end{align}
where the continuous, piecewise differentiable kernel $v$ is defined as
\begin{equation*}
v_i(t,x) :=\begin{cases} (x_{i+1}-x)(x_i-t), & x_i\leq t\leq x,\\
(x-x_{i})(t- x_{i+1}), & x< t \leq x_{i+1},
\end{cases}.
\end{equation*}

To find an upper bound on $\Ftnorm{f}-\tF_{n}(f)$, note that
\begin{equation*}
\Ftnorm{f} - \tF_{n}(f) = \Ftnorm{f} - \bigabs{A_n(f)}_{\tcf} \le \bigabs{f-A_n(f)}_{\tcf} = \bignorm[\infty]{f' -A_n(f)'},
\end{equation*}
since $(f-A_n(f))(x)$ vanishes for $x=0,1$. Using \eqref{fprimeintermsfdoubprime} it then follows that
\begin{align*}
\Ftnorm{f} - \tF_{n}(f) & \le \bignorm[\infty]{f' -A_n(f)'} \\
& = \sup_{\substack{x_i \le x \le x_{i+1}\\ i=1, \ldots, n-1 }} \abs{f'(x) -(n-1)[f(x_{i+1})-f(x_i)]} \, \dif x \\
&=(n-1) \sup_{\substack{x_i \le x \le x_{i+1}\\ i=1, \ldots, n-1 }} \abs{ \int_{x_i}^{x_{i+1}} \frac{\partial v_i}{\partial x}(t,x) f''(t) \, \dif t} \\
& \le (n-1) \norm[\infty]{f''} \sup_{\substack{x_i \le x \le x_{i+1}\\ i=1, \ldots, n-1 }} \int_{x_i}^{x_{i+1}} \abs{\frac{\partial v_i}{\partial x}(t,x)} \, \dif t \\
&=(n-1)\norm[\infty]{f''} \sup_{\substack{x_i \le x \le x_{i+1}\\ i=1, \ldots, n-1 }} \left \{\frac{1}{2(n-1)^2} - (x-x_i)(x_{i+1}-x) \right\} \\
&= h_+(n) \norm[\infty]{f''}, \qquad \qquad  h_+(n):= \frac{1}{2(n-1)}.
\end{align*}
This implies that $\fC_n =1/[1 - \tau /(2n-2)]$ provided that $n>1+\tau/2$.
Since $\tF_2(f)=0$ by definition, the above inequality for $\Ftnorm{f} - \tF_{2}(f)$ implies that $\tau_{\min}=2$.

To derive the error bounds for $A_n(f)$ we make use of \eqref{fintermsfdoubprime}:
\begin{align*}
\norm[\infty]{f-A_n(f)}
& \le \sup_{\substack{x_i \le x \le x_{i+1}\\ i=1, \ldots, n-1 }} \abs{f(x)-A_n (f)(x)}\\
&= (n-1)\sup_{\substack{x_i \le x \le x_{i+1}\\ i=1, \ldots, n-1 }}  \int_{x_i}^{x_{i+1}} \abs{v_i(t,x) f''(t)} \, \dif t\\
&= (n-1)\norm[\infty]{f''} \sup_{\substack{x_i \le x \le x_{i+1}\\ i=1, \ldots, n-1 }}  \int_{x_i}^{x_{i+1}} \abs{v_i(t,x)} \, \dif t\\
&=\norm[\infty]{f''} \sup_{\substack{x_i \le x \le x_{i+1}\\ i=1, \ldots, n-1 }}  \frac{(x-x_i)(x_{i+1}-x)}2 \\
&= h(n) \norm[\infty]{f''}, \qquad \qquad  h(n):= \frac{1}{8(n-1)^2}.
\end{align*}
Since $h_{\pm}(n)$ and $h(n)$, are the same as in the previous section for integration, the simplifications in \eqref{simplifycond} apply here as well.  Then Algorithm \ref{multistagealgo} and Theorem \ref{MultiStageThm} may be applied directly to  yield the following algorithm for function approximation and its guarantee.

\begin{algo}[Adaptive Univariate Function Recovery] \label{multistageapproalgo}
Let the sequence of algorithms $\{A_n\}_{n\in \mathcal{I}}$, $\{\tF_n\}_{n\in \mathcal{I}}$, and $\{F_n\}_{n\in \mathcal{I}}$ be as described above.
Let $\tau \ge 2$ be the cone constant. Set $i=1$. Let $n_1=\lceil(\tau+1)/2\rceil+1$. For any error tolerance $\varepsilon$ and input function $f$, do the following:
\begin{description}
\item[Stage 1.\ Estimate {$\norm[\infty]{f'-f(1)+f(0)}$} and bound {$\norm[\infty]{f''}$}.] Compute $\tF_{n_i}(f)$ in \eqref{estfirstderiv} and $F_{n_i}(f)$ in \eqref{Fnormappxalg}.

\item[Stage 2. Check the necessary condition for $f \in \cc_{\tau}$.] Compute
    \begin{align*}
     \tau_{\min,n_i} =  \frac{F_{n_i}(f)}{\tF_{n_i}(f)+F_{n_i}(f)/(2n_i-2)}.
    \end{align*}
If $\tau \ge \tau_{\min,n_i}$, then go to stage 3.  Otherwise, set $\tau = 2\tau_{\min,n_i}$.  If $n_i \ge (\tau+1)/2$, then go to stage 3.  Otherwise, choose
$$
n_{i+1}=1+ (n_i-1)\left\lceil\frac{\tau+1}{2n_i-2}\right\rceil.
$$
Go to Stage 1.

\item[Stage 3. Check for convergence.] Check whether $n_i$ is large enough to satisfy the error tolerance, i.e.
    \begin{equation*}
     \tF_{n_i}(f) \le \frac{4\varepsilon(n_i-1)(2n_i-2 - \tau)}{\tau}.
    \end{equation*}
If this is true, then return $A_{n_i}(f)$ and terminate the algorithm.   If this is not true, choose
$$
n_{i+1}=1+ (n_i-1)\max\left\{2,\left\lceil\frac{1}{(n_i-1)}\sqrt{\frac{\tau \tF_{n_i}(f)}{8\varepsilon}}\right\rceil\right\}.
$$
Go to Stage 1.
\end{description}
\end{algo}

\begin{theorem} \label{multistageappxthm}
Let $\sigma >0$ be some fixed parameter, and let $\cb_{\sigma}=\{f \in  \mathcal{W}^{2,\infty} : \norm[\infty]{f''}\leq \sigma\}$. Let $A \in \ca(\cb_{\sigma}, \cl_{\infty}, \APP, \Lambda^{\std})$ be the non-adaptive linear spline defined by Algorithm \ref{nonadaptalgo}, and let $\varepsilon>0$ be the error tolerance. Then this algorithm succeeds for $f \in \cb_{\sigma}$, i.e., $\norm[\infty]{f - A(f,\varepsilon)} \le \varepsilon$, and the cost of this algorithm is $\left \lceil \sqrt{\sigma/(8\varepsilon)}\right \rceil + 1$, regardless of the size of $\norm[\infty]{f''}$.

Let $A \in \ca(\cc_{\tau}, \cl_{\infty}, \APP, \Lambda^{\std})$ be the adaptive linear spline defined by Algorithm \ref{multistageapproalgo}, and let $\tau$, $n_1$, and $\varepsilon$ be the inputs and parameters described there. Let $\cc_\tau$ be the cone of functions defined in \eqref{coneappxdef}.  Then it follows that Algorithm \ref{multistageapproalgo} is successful for all functions in $\cc_{\tau}$,  i.e.,  $\norm[\infty]{f - A(f,\varepsilon)} \le \varepsilon$.  Moreover, the cost of this algorithm is bounded below and above as follows:
\begin{multline}
\max \left(\left \lceil\frac{\tau+1}{2} \right \rceil, \left \lceil \sqrt{\frac{ \norm[\infty]{f''}}{8\varepsilon}} \right \rceil \right) +1 \\
\le \max \left(\left \lceil\frac{\tau+1}{2} \right \rceil, \left \lceil \sqrt{\frac{\tau \norm[\infty]{f'-f(1)+f(0)}}{8\varepsilon}} \right \rceil \right) +1 \\
\le
\cost(A,f;\varepsilon,N_{\max}) \\
\le \sqrt{\frac{\tau \norm[\infty]{f'-f(1)+f(0)}}{2\varepsilon}} + \tau + 4
\le \sqrt{\frac{\tau \norm[\infty]{f''} }{4\varepsilon}} + \tau + 4.
\end{multline}
The algorithm is computationally stable, meaning that the minimum and maximum costs for all integrands, $f$, with fixed $\norm[\infty]{f'-f(1)+f(0)}$ or $\norm[\infty]{f''}$ are an $\varepsilon$-independent constant of each other.
\end{theorem}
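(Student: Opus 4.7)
My plan is to recognize that Theorem \ref{multistageappxthm} is a direct specialization of results already in hand. The derivations immediately preceding Algorithm \ref{multistageapproalgo} show that the linear spline $A_n$ and its matching first-derivative estimator $\tF_n$ satisfy the hypotheses \eqref{algoerr}, \eqref{Fspacecondstrong}, \eqref{Gerrbds}, and \eqref{algseqerrcond} with
\[
h(n)=\frac{1}{8(n-1)^2},\quad h_{-}(n)=0,\quad h_{+}(n)=\frac{1}{2(n-1)},\quad \tau_{\min}=2,
\]
and $\ci=\{2,3,\ldots\}$; moreover $A_n$ is embedded in $A_{2n-1}$ (giving cost multiple $r=2$) and $\tF_n$ uses the same nodes as $A_n$. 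These are exactly the values encountered in the integration case, so the simplifications in \eqref{simplifycond} transfer verbatim.

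First I would dispatch the non-adaptive claim by applying Theorem \ref{NonAdaptDetermThm}: with $\Fnorm{\cdot}=\norm[\infty]{\,\cdot\,''}$ the cost is $h^{-1}(\varepsilon/\sigma)$, and inverting $h(n)=1/[8(n-1)^2]$ immediately yields $\lceil\sqrt{\sigma/(8\varepsilon)}\rceil+1$, with the error guarantee $\norm[\infty]{f-A(f,\varepsilon)}\le\varepsilon$ following directly.

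Next I would apply Theorem \ref{MultiStageThm}. The choice $n_1=\lceil(\tau+1)/2\rceil+1$ makes $h_{+}(n_1)<1/\tau$, so Lemma \ref{Gnormlem} is in force from the start and the stopping rule \eqref{multistageconv} is the one analyzed in the general theorem. The one feature of Algorithm \ref{multistageapproalgo} not present in the abstract Algorithm \ref{multistagealgo} is the Stage 2 necessary-condition check that may inflate $\tau$; here I would invoke Remark \ref{neccondrem} to observe that whenever $f\in\cc_{\tau}$ we have $\tau_{\min,n_i}\le\tau$, so this check never modifies $\tau$ on a legitimate cone input. Algorithm \ref{multistageapproalgo} therefore reduces to Algorithm \ref{multistagealgo} on $\cc_{\tau}$, and Theorem \ref{MultiStageThm} delivers both the success guarantee and the two-sided cost bounds with $h_1=h_2=\fC_n h$ given by \eqref{simplifycond}. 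Substituting the closed form for $h_1^{-1}$ in \eqref{simplifycond} and using $r=2$ produces the stated upper bound $\sqrt{\tau\norm[\infty]{f'-f(1)+f(0)}/(2\varepsilon)}+\tau+4$; the further bound in terms of $\norm[\infty]{f''}$ uses $\norm[\infty]{f'-f(1)+f(0)}\le\norm[\infty]{f''}/\tau_{\min}=\norm[\infty]{f''}/2$, which is exactly \eqref{Fspacecondstrong}. The lower bound comes from the corresponding side of \eqref{automultistagedetcost}, simplified by the same closed-form inversion.

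The main obstacle is purely bookkeeping: reconciling the ceiling-function expressions so the abstract bounds match the clean closed forms in the statement, and confirming that Stage 2's $\tau$-inflation cannot activate on a cone input (handled by Remark \ref{neccondrem}). Computational stability then follows from Theorem \ref{MultiStageThm} because $h(n)=\Theta(n^{-2})$ satisfies \eqref{hinvonetwocond}, so the ratio of the two-sided cost bounds is a constant depending only on $\tau$.
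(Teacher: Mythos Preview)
Your proposal is correct and follows essentially the same approach as the paper. The paper itself offers no proof block for this theorem; immediately before stating Algorithm \ref{multistageapproalgo} it simply observes that $h_{\pm}(n)$ and $h(n)$ coincide with the integration case, so the simplifications in \eqref{simplifycond} carry over and ``Algorithm \ref{multistagealgo} and Theorem \ref{MultiStageThm} may be applied directly''---your write-up merely spells out that application (including the Stage~2 check via Remark \ref{neccondrem}) in more detail than the paper does.
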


\subsection{Lower Bound on the Computational Cost}
Next, we derive a lower bound on the cost of approximating functions in the ball $\cb_{\tau}$ and in the cone $\cc_{\tau}$ by constructing fooling functions. Following the arguments of Section \ref{LowBoundSec}, we choose the parabola $f_0: x \mapsto x(1-x)$. Then
\begin{gather*}
\Ftnorm{f_0}=\norm[\infty]{f'_0-f_0(1)+f_0(0)}=\sup_{0 \le x \le 1} \abs{1-2x} = 1, \\ \Fnorm{f_0}=\norm[\infty]{f''_0}=2= \tau_{\min}.
\end{gather*}
For any $n \in \cj:=\natzero$, suppose that the one has the data $L_i(f)=f(\xi_i)$, $i=1, \ldots, n$ for arbitrary $\xi_i$, where $0=\xi_0 \le \xi_1 < \cdots < \xi_n \le \xi_{n+1} = 1$.  There must be some $j=0, \ldots, n$ such that $\xi_{j+1} - \xi_j \ge 1/(n+1)$.  The function $f_{1}$ is defined as a bump having piecewise constant second derivative on $[\xi_j, \xi_{j+1}]$ and zero elsewhere.  For $\xi_{j} \le x \leq \xi_{j+1}$,
\begin{multline*}
f_{1}(x):=
\frac{1}{32} \left [4(\xi_{j+1}-\xi_j)^2 + (4x-2\xi_j-2\xi_{j+1})^2  \right. \\
\left. + (4x-\xi_j-3\xi_{j+1})\abs{4x-\xi_j-3\xi_{j+1}} -(4x-3\xi_j-\xi_{j+1})\abs{4x-3\xi_j-\xi_{j+1}} \right],
\end{multline*}
\[
f'_{1}(x)=
\frac{1}{4} \left [4x-2\xi_j-2\xi_{j+1} + \abs{4x-\xi_j-3\xi_{j+1}} -\abs{4x-3\xi_j-\xi_{j+1}} \right],
\]
\[
f''_{1}(x)=\sgn(4x-\xi_j-3\xi_{j+1}) - \sgn(4x-3\xi_j-\xi_{j+1}) + 1.
\]
This bump function is similar to the one used in the numerical examples in the previous section and this section.  For this bump $\norm[\infty]{f''_1}=1$, and
\[
\norm[\infty]{f_1}=f_1((\xi_j+\xi_{j+1})/2)= \frac{(\xi_{j+1} - \xi_j)^2}{16} \ge \frac{1}{16(n+1)^2} =: g(n).
\]
Using these choices of $f_0$ and $f_1$, along with the corresponding $g$ above, one may invoke Theorems \ref{complowbdball}--\ref{complowbd}, and Corollary \ref{optimcor} to obtain the following theorem.

\begin{theorem} \label{complowbdappr} For $\sigma>0$ let $\cb_{\sigma}=\{f \in \cw^{2,\infty} : \norm[\infty]{f''} \le \sigma\}$.  The complexity of function recovery on this ball is bounded below as
\begin{equation*}
\comp(\varepsilon,\ca(\cb_{\sigma},\cl_\infty,\APP,\Lambda^{\std}),\cb_{s}) \ge \left \lceil \sqrt{\frac{\min(s,\sigma)}{16 \varepsilon}} \right \rceil -1 .
\end{equation*}
Algorithm \ref{nonadaptalgo} using linear splines has optimal order in the sense of Theorem \ref{optimalprop}.

For $\tau>2$, the complexity of the function recovery problem over the cone of functions $\cc_{\tau}$ defined in \eqref{coneappxdef} is bounded below as
\begin{equation*}
\comp(\varepsilon,\ca(\cc_{\tau},\cl_\infty,\APP,\Lambda^{\std}),\cb_{s}) \ge \left \lceil \sqrt{\frac{(\tau-2)s}{32 \tau \varepsilon}} \right \rceil-1 .
\end{equation*}
The adaptive linear spline Algorithm \ref{multistageapproalgo} has optimal order for recovering functions in $\cc_{\tau}$ the sense of Corollary \ref{optimcor}.
\end{theorem}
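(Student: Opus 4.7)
The plan is to apply the general machinery of Theorems \ref{complowbdball}--\ref{complowbd} and Corollary \ref{optimcor} to the specific fooling functions $f_0$ and $f_1$ described just before the theorem. The work is therefore mostly verification that these functions meet the hypotheses \eqref{assumpfone}--\eqref{assumpfzero} for the approximation problem at hand, followed by reading off the resulting constants.

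First, I would verify \eqref{assumpfzero} for $f_0(x)=x(1-x)$. Direct differentiation gives $f'_0(x)=1-2x$ and $f''_0(x)=-2$; since $f_0(0)=f_0(1)=0$, one immediately obtains $\Ftnorm{f_0}=\norm[\infty]{1-2x}=1$ and $\Fnorm{f_0}=\norm[\infty]{f''_0}=2$, which matches $\tau_{\min}=2$ established in the paragraph preceding Algorithm \ref{multistageapproalgo}. The hypothesis $\tau>\tau_{\min}$ needed in Theorem \ref{complowbd} is then exactly $\tau>2$, which is assumed.

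Next, I would verify \eqref{assumpfone} for $f_1$. Given any data design $\vL=(L_1,\dots,L_n)\in(\Lambda^{\std})^n$ with $\$(\vL)\le n$, a pigeonhole argument produces adjacent points $\xi_j<\xi_{j+1}$ (with $\xi_0=0$, $\xi_{n+1}=1$) satisfying $\xi_{j+1}-\xi_j\ge 1/(n+1)$. The explicit formulas given in the excerpt make $f_1$ piecewise quadratic on $[\xi_j,\xi_{j+1}]$ and identically zero outside, with $f_1(\xi_j)=f_1(\xi_{j+1})=0$ and $f'_1$ vanishing at both endpoints, so $f_1$ and $f_1'$ are continuous on $[0,1]$. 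A direct check of the formula for $f''_1$ shows $f''_1\in\{-1,+1,0\}$ a.e., whence $\Fnorm{f_1}=\norm[\infty]{f''_1}=1$, consistent with \eqref{assumpfone}. Since $f_1$ vanishes at every $\xi_i$ (either $\xi_i\notin[\xi_j,\xi_{j+1}]$ or $\xi_i\in\{\xi_j,\xi_{j+1}\}$), we have $\vL(f_1)=\vzero$. Finally, by symmetry the maximum of $f_1$ is attained at the midpoint of $[\xi_j,\xi_{j+1}]$, giving $\norm[\infty]{S(f_1)}=\norm[\infty]{f_1}=(\xi_{j+1}-\xi_j)^2/16\ge 1/[16(n+1)^2]=g(n)$, which is the third clause of \eqref{assumpfone}. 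Inverting $g$ yields $g^{-1}(\varepsilon)=\lceil\sqrt{1/(16\varepsilon)}\rceil-1$.

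With \eqref{assumpfone}--\eqref{assumpfzero} verified, Theorem \ref{complowbdball} applied with $\sigma$ and $s$ yields the stated ball lower bound $\lceil\sqrt{\min(s,\sigma)/(16\varepsilon)}\rceil-1$. For the optimality of the non-adaptive linear spline via Theorem \ref{optimalprop}, one needs to check \eqref{ghcond}: with $h(n)=1/[8(n-1)^2]$ and $g(n)=1/[16(n+1)^2]$,
\[
\frac{h(n)}{g(n)}=\frac{2(n+1)^2}{(n-1)^2}\longrightarrow 2\qquad\text{as }n\to\infty,
\]
which is bounded on $\ci=\{2,3,\dots\}$. Likewise $\sup_{i}N_{i+1}/N_i=\rho<\infty$ for $\ci$ and $h^{-1}(\varepsilon)/\max(1,h^{-1}(2\varepsilon))$ is bounded since $h(n)$ is of polynomial order, verifying \eqref{algseqerrcond}. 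Theorem \ref{complowbd} then gives the cone lower bound with $c_1=s(\tau-\tau_{\min})/(2\tau)=s(\tau-2)/(2\tau)$, and inverting $g$ at $\varepsilon/c_1$ yields $\lceil\sqrt{(\tau-2)s/(32\tau\varepsilon)}\rceil-1$ as claimed. Corollary \ref{optimcor} then delivers optimal order of Algorithm \ref{multistageapproalgo} on the cone.

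The only mildly nontrivial step is the verification that the bump $f_1$ genuinely has $\norm[\infty]{f''_1}=1$ and $\vL(f_1)=\vzero$; everything else is a direct invocation of the general lower bound framework of Section \ref{LowBoundSec} with the constants computed in Section \ref{approxsec}.
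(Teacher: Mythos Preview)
Your proposal is correct and follows exactly the approach the paper takes: the paper's ``proof'' of this theorem is literally the one-sentence remark that with the constructed $f_0$, $f_1$, and $g$ one invokes Theorems \ref{complowbdball}--\ref{complowbd} and Corollary \ref{optimcor}. You have simply spelled out the verifications of \eqref{assumpfone}, \eqref{assumpfzero}, and \eqref{ghcond} in more detail than the paper does, which is entirely appropriate.
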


\subsection{Numerical Example}

To illustrate Algorithm \ref{multistageapproalgo} we choose the same  family of test functions as in \eqref{testfun}, but now with $b=1/(2a^2)$.   Since $\norm[\infty]{f'-f(0)+f(1)}=1/a$ and $\norm[\infty]{f''}=1/a^2$, the probability that $f \in \cc_{\tau}$ is $\min\left(1,\max(0,\left(\log_{10}(\tau)-1\right)/3)\right).$
The number of random functions chosen, the error tolerance, the initial $\tau$ values, and the cost budget are the same as in Section \ref{integnumexamplesec}.  Table \ref{approxnumerical} shows results that are analogous to Table \ref{integresultstable}.  Algorithm \ref{multistageapproalgo} yields the correct value to within the error tolerance for all $f$ that finally lie inside $\cc_{\tau}$ and for which the algorithm does not try to exceed the cost budget.

\begin{table}[h]
\centering
\begin{tabular}{cccccc}
&&Success & Success & Failure & Failure \\
$\tau$ &  $\Prob(f \in \cc_{\tau}) $ & No Warning & Warning & No Warning & Warning\\
\toprule
$10$ & $0\% \rightarrow  26\% $ & $26\%$  & $<1\%$  &$74\%$ & $<1\%$\\
$100$ & $33 \% \rightarrow 57\% $ & $56\%$ & $1\%$ & $43\%$ & $1\%$\\
$1000$ & $67\% \rightarrow 88\% $& $75\%$ & $5\%$ & $12\%$ & $8\%$\\
\end{tabular}
\caption{The probability of the test function lying in the cone for the original and eventual values of $\tau$ and the empirical success rate of Algorithm \ref{multistageapproalgo}.  \label{approxnumerical}}
\end{table}

\section{Addressing Questions and Concerns About Adaptive, Automatic Algorithms} \label{overcomesec}

Adaptive, automatic algorithms are popular, especially for univariate integration problems.  Several general purpose numerical computing environments have one or more automatic integration routines, such as MATLAB \cite{TrefEtal12,MAT8.1} and the NAG \cite{NAG23} library.  In spite of their popularity there remain important questions and concerns regarding adaptive algorithms.  This section attempts to address them.

\subsection{All Automatic Algorithms Can Be Fooled}  

Any algorithm that solves a problem involving an infinite-dimensional space of input functions can be fooled by a spiky function, i.e., one that yields zero data where probed by the algorithm, but is nonzero elsewhere.   Figure \ref{fig:foolquad}a) depicts a spiky integrand whose integral is $\approx 0.3694$, but for which MATLAB's {\tt quad}, which is based on adaptive Simpson's rule  \cite{GanGau00a}, gives the answer $0$, even with an error tolerance of $10^{-14}$.  Our criticism of algorithms like {\tt quad} is not that they can be fooled, but that there is no available theory to tell us what is wrong with the integrand when they are fooled.  Guaranteed algorithms specify conditions that rule out spiky functions that might fool these algorithms.  Non-adaptive algorithms such as Algorithm \ref{nonadaptalgo} require that input functions lie in a ball, while adaptive algorithms, such as  Algorithms \ref{twostagedetalgo} and \ref{multistagealgo}, require that input functions lie in a cone.

\begin{figure}
\centering 
\begin{tabular}{cc}
\includegraphics[width=5.5cm]{}
&
\includegraphics[width=5.5cm]{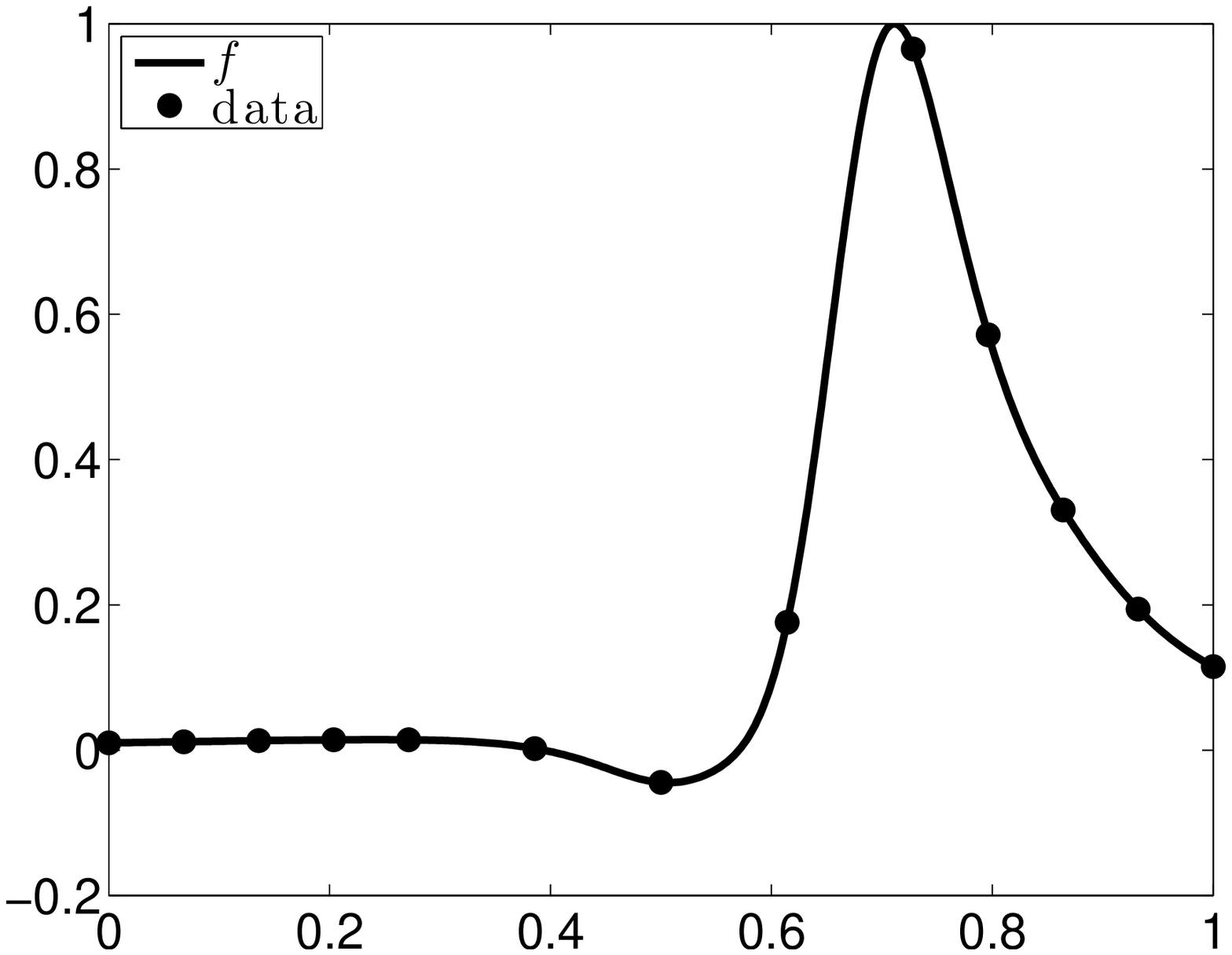} \\
a) & b)
\end{tabular}
\caption{a) A spiky integrand designed to fool MATLAB's {\tt quad} and the data sampled by {\tt quad}; b) A fluky integrand designed to fool {\tt quad}. \label{fig:foolquad}}
\end{figure}

\subsection{Why Cones?}

Most existing numerical analysis is focused on balls of input functions, $\cb_{\sigma}$, and the automatic algorithms arising from this analysis are non-adaptive, automatic such as Algorithm \ref{nonadaptalgo}. The analysis here focuses on cones of input functions, $\cc_{\tau}$, which allows us to derive data-driven error bounds and construct adaptive, automatic algorithms. We have two reasons for favoring cones.  

Since the solution operator, $S$, and the fixed-cost algorithms, $\{A_n\}_{n\in \ci}$, commonly encountered in practice are positively homogeneous, the error functional, $\err_n(\cdot) = \Gnorm{S(\cdot)-A_n(\cdot)}$ is also positively homogeneous.  This naturally suggests data-driven error bounds, $\herr_n(\cdot)$, that are positively homogeneous.  If $\err_n(f) \le \herr_n(f)$, then $\err_n(cf) \le \herr_n(cf)$ for $c\ge 0$. This leads us to consider cones of input functions.

A second reason to favor cones is that we want to spend less effort solving problems for input functions that are ``easy'', i.e., we want an adaptive algorithm.  At the end of Section \ref{coneoptsubsec} it was noted that our adaptive algorithms possess a stronger optimality than the non-adaptive one.  In particular, in Theorems \ref{multistageintegthm} and \ref{multistageappxthm} the costs of the non-adaptive algorithms do not depend on the norms of the input functions, but the costs of the adaptive algorithms do so in a favorable way.

There are rigorous results from information based complexity theory giving general conditions under which adaptive algorithms have no significant advantage over non-adaptive algorithms (e.g., see \cite[Chapter 4, Theorem 5.2.1]{TraWasWoz88} and \cite{Nov96a}). For adaption to be useful, we must violate one of these conditions.  In particular, we violate the condition that the set of input functions be convex.

To see why $\cc_{\tau}$ is not convex, let $f_{\text{in}}$ and $f_{\text{out}}$ be functions in $\cf$ with nonzero $\tcf$-semi-norms, where $f_{\text{in}}$  lies in the interior of this cone, and $f_{\text{out}}$ lies outside the cone.  This means that 
\[
\frac{\Fnorm{f_{\text{in}}}} {\Ftnorm{f_{\text{in}}}} = \tau_{\text{in}} < \tau < \tau_{\text{out}} =  \frac{\Fnorm{f_{\text{out}}}} {\Ftnorm{f_{\text{out}}}}.
\]
Next define two functions $f_{\pm} = (\tau-\tau_{\text{in}}) \Ftnorm{f_{\text{in}}} f_{\text{out}}  \pm (\tau + \tau_{\text{out}}) \Ftnorm{f_{\text{out}}} f_{\text{in}}$.
Since 
\begin{multline*} 
\Fnorm{f_{\pm}} \le (\tau-\tau_{\text{in}}) \Ftnorm{f_{\text{in}}} \Fnorm{f_{\text{out}}}  + (\tau + \tau_{\text{out}}) \Ftnorm{f_{\text{out}}} \Fnorm{f_{\text{in}}}\\
= [\tau_{\text{out}} (\tau-\tau_{\text{in}})  + \tau_{\text{in}} (\tau + \tau_{\text{out}})] \Ftnorm{f_{\text{in}}} \Ftnorm{f_{\text{out}}} =  \tau (\tau_{\text{out}} +\tau_{\text{in}}) \Ftnorm{f_{\text{in}}} \Ftnorm{f_{\text{out}}},
\end{multline*}
and 
\begin{multline*} 
\Ftnorm{f_{\pm}} \ge -(\tau-\tau_{\text{in}}) \Ftnorm{f_{\text{in}}} \Ftnorm{f_{\text{out}}}  + (\tau + \tau_{\text{out}}) \Ftnorm{f_{\text{out}}} \Ftnorm{f_{\text{in}}}\\
 =  (\tau_{\text{out}} +\tau_{\text{in}}) \Ftnorm{f_{\text{in}}} \Ftnorm{f_{\text{out}}},
\end{multline*}
it follows that $\Fnorm{f_{\pm}} \le \tau \Ftnorm{f_{\pm}}$, and so $f_{\pm} \in \cc_{\tau}$.  On the other hand $(f_- + f_+)/2$, which is a convex combination of $f_{+}$ and $f_-$, is $(\tau-\tau_{\text{in}}) \Ftnorm{f_{\text{in}}} f_{\text{out}}$.  Since $\tau > \tau_{\text{in}}$, this is a nonzero multiple of $f_{\text{out}}$, and it lies outside $\cc_{\tau}$.  Thus, this cone is not convex.

\subsection{Adaptive Algorithms that Stop When $A_{n_{i}}(f)-A_{n_{i-1}}(f)$ Is Small} \label{Lynesssubsec}

Many practical adaptive, automatic algorithms, especially those for univariate integration, are based on a stopping rule that returns $A_{n_{i}}(f)$ as the answer for the first $i$ where $\norm[\cg]{A_{n_{i}}(f)-A_{n_{i-1}}(f)}$ is small enough.  Fundamental texts in numerical algorithms advocate such stopping rules, e.g. \cite[p.\ 223--224]{BurFai10}, \cite[p.\ 233]{CheKin12a}, and \cite[p.\ 270]{Sau12a}.  Unfortunately, such stopping rules are problematic.

For instance, consider the univariate integration problem and the trapezoidal rule algorithm, $T_{n_i}$, based on $n_i=2^i+1$ points, i.e., $n_i-1=2^i$ trapezoids.  It is taught that the trapezoidal rule has the following error estimate:
\begin{equation} \label{traperrest}
\herr_i(f):=\frac{T_{n_{i}}(f)-T_{n_{i-1}}(f)}{3} \approx \int_0^1 f(x) \, \dif x - T_{n_i}(f) =:\err_i(f).
\end{equation}
Since $T_{n_i}(f)+ \herr_i(f)$ is exactly Simpson's rule, it follows that $\err_i(f) -\herr_i(f) = \Theta(16^{-i} \Var(f^{(3)}))$.  The error estimate may be good for moderate $i$, but it can only be guaranteed with some a priori knowledge of $\Var(f^{(3)})$.

In his provocatively titled SIAM Review article, \emph{When Not to Use an Automatic Quadrature Routine} \cite[p.\ 69]{Lyn83}, James Lyness makes the following claim.
\begin{quote}
While prepared to take the risk of being misled by chance alignment of zeros in the integrand function, or by narrow peaks which are ``missed,'' the user may wish to be reassured that for ``reasonable'' integrand functions which do not have these characteristics all will be well. It is the purpose of the rest of this section to demonstrate by example that he cannot be reassured on this point. In fact the routine is likely to be unreliable in a significant proportion of the problems it faces (say $1$ to $5\%$) and there is no way of predicting in a straightforward way in which of any set of apparently reasonable problems this will happen.
\end{quote}

Lyness's argument, with its pessimistic conclusion, is correct for commonly used adaptive, automatic algorithms.  Figure \ref{fig:foolquad}b depicts an integrand inspired by \cite{Lyn83} that we would call ``fluky''.  MATLAB's {\tt quad} gives the answer $\approx 0.1733$, for an absolute error tolerance of $\varepsilon=10^{-14}$, but the true answer is $\approx 0.1925$.  The {\tt quad} routine splits the interval of integration into three separate intervals and initially calculates Simpson's rule with one and two parabolas for each of the three intervals.  The data taken are denoted by $\bullet$ in Figure \ref{fig:foolquad}.  Since this fluky integrand is designed so that the two Simpson's rules match exactly for each of the three intervals, {\tt quad} is fooled into thinking that it knows the correct value of the integral and terminates immediately.

Lyness's warning in \cite{Lyn83} is a valid objection to commonly used stopping criteria based on a simple measure of the difference between two successive fixed-cost algorithms, e.g., error estimate \eqref{traperrest}. However, it is not a valid objection to adaptive, automatic algorithms in general, and it does not apply to our adaptive algorithms.

\section{Discussion and Further Work} \label{furthersec}

We believe that there should be more adaptive, automatic algorithms with rigorous guarantees of their success.  Users ought to be able to integrate functions, approximate functions, optimize functions, etc., without needing to manually tune the sample size.  Here we have shown how this might be done in general, as well as specifically for two case studies.  We hope that this will inspire further research in this direction.

The results presented here suggest a number of interesting open problems, some of which we are working on.  Here is a summary.

\begin{itemize}

\item This analysis should be extended to \emph{relative} error tolerances.  

\item The algorithms in Sections \ref{integsec} and \ref{approxsec} have low order convergence.  Guaranteed adaptive algorithms with \emph{higher order convergence rates} for smoother input functions are needed.

\item Other types of problems, e.g., linear differential equations and nonlinear optimization, fit the general framework presented here.  These problems have adaptive, automatic algorithms, but until now without guarantees.

\item The algorithms developed here are \emph{globally adaptive}, in the sense that the function data determines the sample size, but does not lead to denser sampling in areas of interest.  Since local adaption seems beneficial in practice, we need to develop such algorithms with guarantees.

\item For some numerical problems the error bound of the fixed-cost algorithm involves an $\tcf$- or $\cf$-semi-norm that is difficult to approximate.  An example is multivariate quadrature using quasi-Monte Carlo algorithms, where the error depends on the \emph{variation} of the multivariate integrand.  To obtain guaranteed automatic, adaptive algorithms one must either find an efficient way to approximate the semi-norm or find other suitable error bounds that can be reliably obtained from the function data.  

\item This article considers only the worst case error of deterministic algorithms.  Random algorithms must be analyzed by somewhat different methods.  A guaranteed Monte Carlo algorithm for estimating the mean of a random variable, which includes multivariate integration as a special case, has been proposed in \cite{HicEtal14a}.

\item Some of the authors and their collaborators are implementing the algorithms described here, along with others, in the open-source Guaranteed Automatic Integration Library (GAIL) for MATLAB (see \url{https://code.google.com/p/gail/}).  This library will also contain scripts that generate the tables and figures in this paper.

\end{itemize}

\section{Acknowledgements}  The authors are grateful to the editor and two referees for their valuable suggestions.  We are also grateful for fruitful discussions with a number of colleagues. This research is supported in part by grant NSF-DMS-1115392.

\section*{References}
\bibliographystyle{model1b-num-names.bst}
\bibliography{FJH22,FJHown22}
\end{document}